\newtheorem{theorem}{Theorem}[section]
\newtheorem{proposition}[theorem]{Proposition}
\newtheorem{corollary}[theorem]{Corollary}
\theoremstyle{definition}
\newtheorem{definition}[theorem]{Definition}
\newtheorem{remark}[theorem]{Remark}
\renewenvironment{proof}{{\bfseries \noindent Proof.}}{~~~~$\square$}
\def\th@newremark{\th@remark\thm@headfont{\bfseries}}
\begin{document}
\thispagestyle{plain}
‎
\begin{center}

{\Large \bf 
Spherical Indicatrices of a Bertrand Curve in Three Dimensional Lie Groups\\}

\let\thefootnote\relax\footnote

{\bf Ali \c{C}akmak}\vspace*{-2mm}\\
\vspace{2mm} {\small  Bitlis Eren University} \vspace{2mm}

\end{center}

\vspace{4mm}

{\footnotesize
\begin{quotation}
{\noindent \bf Abstract.} In this paper, new representations of a Bertrand curve pair in three
dimensional Lie groups with bi-invariant metric are given. Besides, the
spherical indicatrices of a Bertrand curve pair are obtain and the relations between the
spherical indicatrices and new representations of Bertrand curve pair are shown. 
\end{quotation}
\begin{quotation}
\noindent{\bf AMS Subject Classification:} 53A04; 22E15.

\noindent{\bf Keywords and Phrases:} Lie group, Bertrand curve, general helix, slant helix,
spherical indicatrix.
\end{quotation}}

\section{Introduction}
\label{intro} % It is advised to give each section and subsection a unique label.
The curve theory has an important place in differential geometry. Many
mathematicians made a significant contribution to this subject from past to
present. Among these contributions, some of the relationships among the curve
pairs are particularly interesting. The curve pairs for which there exits some
relationships between their Frenet vectors or curvatures are examples to
special space curves. In particular, Involute-evolute curves, Bertrand curves,
Mannheim curves are well-known examples of curve pairs and have been studied
by many mathematicians \cite{1}, \cite{6,10}.

It is know that the curve $\alpha$, which is given by a parametrized regular
curve, is called a Bertrand curve if there exists another curve $\beta$ such
that the principal normal lines to $\alpha$ and $\beta$ are equal at all
points. The curve $\beta$ is called a Bertrand mate of $\alpha$ \cite{14}. These curves
first investigated by J. Bertrand. He proved that the curve $\alpha$ for which
there exists a linear relation between curvature and torsion: $a\kappa
+b\tau=c,$ $(a,b,c$ are non-zero constants$)$ admits a Bertrand mate.

The degenerate semi-Riemannian geometry of a Lie group are examined in \cite{2}. The general helices in three dimensional Lie groups with a
bi-invariant metric are introduced and a generalization of Lancret's theorem
is obtained in \cite{4}. The some special curves in three dimensional Lie
groups using harmonic curvature function are recharacterized and Bertrand
curves are investigated via harmonic curvature function in three dimensional
Lie groups in \cite{11}. The some relations between slant helices and their
involutes, spherical images in three dimensional Lie groups are given in
\cite{12}. The some features of the spherical indicatrices of a Bertrand curve
and its mate curve are presented in Euclidean 3-space in \cite{13}.

In this paper, we have done a study on spherical indicatrices of Bertrand
curve pair in three dimensional Lie groups with a bi-invariant metric. As far as
we know, spherical indicatrices of a Bertrand curve and its mate curve have
not been shown in three dimensional Lie groups. Hence, this study is intended
to fill this gap.

\section{Preliminaries}
\label{sec:2}
Suppose that $G$ is a Lie group with a bi-invariant metric such that
$\left\langle ,\right\rangle $ is a bi-invariant metric on $G.$ If the Lie
algebra of $G$ is given by $g,$ the Lie algebra $g$ is isomorphic to $T_{e}G,$
where $e$ is neutral element of $G$. Since $\left\langle ,\right\rangle $ is a
bi-invariant metric on $G,$ we get
\begin{equation}
\left\langle X,[Y,Z]\right\rangle =\left\langle [X,Y],Z\right\rangle ,
\label{1}%
\end{equation}
and
\begin{equation}
D_{x}Y=\frac{1}{2}[X,Y], \label{2}%
\end{equation}
where $X,Y,Z\in g$ and $D$ is the Levi-Civita connection of $G.$

Let us assume that $\alpha:I\subset{\mathbb{R}}\rightarrow G$ is an
arc-lenghted curve and $\{X_{1},X_{2},...,X_{n}\}$ is an orthonormal basis of
$g$. Here, we consider that any two vector fields $W$ and $Z$ along the curve
$\alpha$ as $W=\sum_{i=1}^{n}w_{i}X_{i}$ and $Z=\sum_{i=1}^{n}z_{i}X_{i}$ such
that $w_{i}:I\rightarrow{\mathbb{R}}$ and $z_{i}:I\rightarrow{\mathbb{R}}$ are
smooth functions. It is well-known that Lie bracket of $W$ and $Z$ can be
written as
\[
\lbrack W,Z]=\sum_{i=1}^{n}w_{i}z_{i}[X_{i},X_{j}],
\]
and $D_{\alpha^{\prime}}W$ is obtained as%
\begin{equation}
D_{\alpha^{\prime}}W=\dot{W}+\frac{1}{2}[T,W], \label{3}%
\end{equation}
where $T=\alpha^{\prime}$, $\dot{W}=\sum_{i=1}^{n}\dot{w}_{i}X_{i}$ and
$D_{\alpha^{\prime}}W$ is the covariant derivative of $W$ along the curve
$\alpha.$ In this case, $\dot{W}=0$ under the condition that $W$ is the
left-invariant vector field to the curve $\alpha$ \cite{3}.

Suppose that $G$ is a Lie group and $(T,N,B,\kappa,\tau)$ is the Frenet
apparatus of the curve $\alpha$. Hence, the Frenet formulas of the curve
$\alpha$ can be written as
\[
D_{T}T=\kappa N,\,\,\,D_{T}N=-\kappa T+\tau B,\,\,\,D_{T}B=-\tau N,
\]
where $\kappa=\left\Vert \dot{T}\right\Vert $ \cite{3}.

\begin{proposition}
	\textbf{\cite{4}\textit{ }}\textit{ Suppose that the curve $\alpha(s)$ is a
		curve\ in Lie group }$\mathit{G}$\textit{ such that the parameter }%
	$\mathit{s}$\textit{ is the arc length parameter of $\alpha(s)$ and\ the
		Frenet apparatus of $\alpha(s)$ are$\ (T,N,B,\kappa,\tau)$. Then the following
		equalities hold}
	\begin{equation}
	\left\{
	\begin{array}
	[c]{l}%
	{[T,N]=\left\langle [T,N],\,\,B\right\rangle B=2\tau_{G}B,}\\
	{[T,\,\,B]=\left\langle [T,B],\,\,N\right\rangle N=-2\tau_{G}N.}%
	\end{array}
	\right.  \label{4}%
	\end{equation}
	
\end{proposition}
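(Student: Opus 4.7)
The plan is to expand $[T,N]$ and $[T,B]$ in the orthonormal Frenet basis $\{T,N,B\}$ and kill all but one component using the bi-invariance identity (\ref{1}), which is equivalent to skew-symmetry of $\mathrm{ad}$, i.e.\ $\langle[X,Y],Z\rangle=-\langle Y,[X,Z]\rangle$.

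First I would handle $[T,N]$. Writing $[T,N] = \langle[T,N],T\rangle T + \langle[T,N],N\rangle N + \langle[T,N],B\rangle B$, I want to show that the first two coefficients vanish. For the $T$-component, apply (\ref{1}) with $X=T$, $Y=T$, $Z=N$ to get $\langle T,[T,N]\rangle = \langle[T,T],N\rangle = 0$ since $[T,T]=0$. For the $N$-component, apply (\ref{1}) with $X=N$, $Y=T$, $Z=N$ to get $\langle N,[T,N]\rangle = \langle[N,T],N\rangle = -\langle[T,N],N\rangle$, forcing $\langle[T,N],N\rangle=0$. Hence $[T,N]=\langle[T,N],B\rangle B$, and we simply \emph{define}
\[
\tau_G := \tfrac{1}{2}\langle[T,N],B\rangle,
\]
so that $[T,N] = 2\tau_G B$.

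For the second identity I would repeat the same alternating-form argument on $[T,B]$: the $T$-component vanishes because $[T,T]=0$, and the $B$-component vanishes by the same self-pairing trick, leaving $[T,B] = \langle[T,B],N\rangle N$. Then I would tie the remaining coefficient to $\tau_G$ by one more application of (\ref{1}): taking $X=N$, $Y=T$, $Z=B$ gives
\[
\langle N,[T,B]\rangle = \langle[N,T],B\rangle = -\langle[T,N],B\rangle = -2\tau_G,
\]
which yields $[T,B] = -2\tau_G N$, completing the proposition.

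There is really no hard step here; the whole argument is algebraic and uses only the orthonormality of the Frenet frame and equation (\ref{1}). The only mild subtlety is recognising that the quantity $\tau_G$ appearing in the statement is not an independent object but is itself defined by the very relation $[T,N]=2\tau_G B$, so the proof amounts to (i) showing $[T,N]$ is a scalar multiple of $B$, and (ii) identifying $[T,B]$ in terms of the same scalar via one invocation of bi-invariance.
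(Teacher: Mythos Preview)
Your argument is correct: expanding $[T,N]$ and $[T,B]$ in the orthonormal Frenet frame and using the bi-invariance identity (\ref{1}) to kill the $T$- and self-components is exactly the right mechanism, and your final cross-identification $\langle N,[T,B]\rangle=\langle[N,T],B\rangle=-2\tau_G$ is clean. Note, however, that the paper does not supply its own proof of this proposition; it is quoted from \cite{4} and stated without argument, so there is no in-paper proof to compare against. Your write-up would serve perfectly well as the omitted justification.
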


Suppose that the curve $\alpha(s)$ is a curve in three dimensional Lie group
$G$ such that the parameter $s$ is the arc length parameter of $\alpha(s)$.
Then from the Eq.(3) and Proposition 1, Frenet formulas are found as follows:%
\[
\left(
\begin{array}
[c]{c}%
{\frac{dT}{ds}}\\
{\frac{dN}{ds}}\\
{\frac{dB}{ds}}%
\end{array}
\right)  =\left(
\begin{array}
[c]{ccc}%
{0} & {\kappa} & {0}\\
{-\kappa} & {0} & {\tau-\tau_{G}}\\
{0} & {-(\tau-\tau_{G})} & {0}%
\end{array}
\right)  \left(
\begin{array}
[c]{c}%
{T}\\
{N}\\
{B}%
\end{array}
\right)  ,
\]
where $\left\{  T,N,B\right\}  $ is the Frenet frame, $\tau_{G}=\frac{1}%
{2}\left\langle [T,N],B\right\rangle $ and $\kappa,\tau$ are curvature and
torsion of $\alpha$ in $G$ , respectively \cite{12}.

\begin{definition}
	\textbf{\cite{4} }Suppose that the curve $\alpha(s)$ is a curve in three
	dimensional Lie group $G$ such that the parameter $s$ is the arc length
	parameter of $\alpha(s)$ and the Frenet apparatus of $\alpha(s)$ are
	$(T,N,B,\kappa,\tau)$. Then the harmonic curvature function of the curve
	$\alpha$ can be given by
	\begin{equation}
	H=\frac{\tau-\tau_{G}}{\kappa}. \label{5}%
	\end{equation}
	
\end{definition}

\begin{theorem}
	\textbf{\textit{ }\cite{4}}\textit{ Suppose that the curve }$\alpha
	(s)$\textit{ is a curve\ in Lie group }$\mathit{G}$\textit{ such that the
		parameter }$\mathit{s}$ is the arc length parameter of $\alpha(s)$ and\ the
	Frenet apparatus of $\alpha(s)$ are$\ (T,N,B,\kappa,\tau)$.\textit{The curve
	}$\alpha$\textit{ is a general helix if and only if }$\tau=c\kappa+\tau_{G}%
	$\textit{ , where }$c\in{\mathbb{R}}$\textit{. }
\end{theorem}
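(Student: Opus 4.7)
The plan is to adapt the classical Lancret proof to the Lie-group setting using the connection formula (\ref{3}) and the bracket identities (\ref{4}). The whole argument revolves around a candidate axis of the form $U = \cos\theta\,T + \sin\theta\,B$, and everything reduces to tracking a single $N$-component.

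For the forward direction, I would assume $\alpha$ is a general helix with axis $U$, i.e.\ a unit left-invariant vector field satisfying $\langle T,U\rangle = \cos\theta$ for some constant angle $\theta$. Differentiating this identity along $\alpha$ using metric compatibility, and exploiting the bi-invariance property (\ref{1}) via $\langle T,[T,U]\rangle = \langle [T,T],U\rangle = 0$, forces $\langle N,U\rangle = 0$. So $U$ takes the form $\cos\theta\,T + \sin\theta\,B$ along $\alpha$. I would then compute $D_T U$ in two ways: directly from this decomposition using the Frenet formulas $D_T T = \kappa N$ and $D_T B = -\tau N$, and alternatively via $D_T U = \tfrac12[T,U]$ (valid since $U$ is left-invariant, so $\dot U = 0$ in (\ref{3})) together with $[T,B] = -2\tau_G N$ from (\ref{4}). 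Equating the $N$-components yields $\kappa\cos\theta = (\tau - \tau_G)\sin\theta$, which is the claimed identity with $c = \cot\theta$.

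The converse is essentially the same calculation read backwards. Given $\tau = c\kappa + \tau_G$, choose $\theta$ with $\cot\theta = c$ and define $U := \cos\theta\,T + \sin\theta\,B$ along $\alpha$. Using (\ref{3}) to write $\dot U = D_T U - \tfrac12[T,U]$ and plugging in the two computations from above, the result collapses to $\bigl(\kappa\cos\theta - (\tau-\tau_G)\sin\theta\bigr)N$, which vanishes exactly by the hypothesis and the choice of $\theta$. Hence $U$ is left-invariant along $\alpha$, and since $\langle T,U\rangle = \cos\theta$ is constant by construction, $\alpha$ is a general helix with axis $U$.

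The only real subtlety — and the main obstacle worth calling out — is the careful bookkeeping between the ordinary derivative $\dot{}$ and the covariant derivative $D_T$: for a left-invariant $U$ it is $\dot U$ (not $D_T U$) that vanishes, and the correction term $\tfrac12[T,U]$ in (\ref{3}) is precisely what converts the classical Lancret invariant $\tau/\kappa$ into the Lie-group invariant $(\tau-\tau_G)/\kappa$, by way of the bracket formula $[T,B] = -2\tau_G N$.
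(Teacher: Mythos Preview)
Your argument is correct and is essentially the classical Lancret proof transported to the Lie-group setting via the connection formula (\ref{3}) and the bracket identities (\ref{4}); the bookkeeping between $\dot U$ and $D_T U$ is handled properly, and the identification $c=\cot\theta$ falls out exactly as you describe.

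There is, however, nothing in the present paper to compare it against: this theorem is quoted from \cite{4} and is not proved here. The paper simply states the result (together with Definition~2.2) and immediately passes to Corollary~2.4. So your write-up is not an alternative to the paper's proof but rather a reconstruction of the argument from the cited source. As such it is sound; the only cosmetic point worth tightening is to note explicitly that $\langle B,U\rangle$ has constant sign by continuity (so that the decomposition $U=\cos\theta\,T+\sin\theta\,B$ is globally valid with a fixed $\theta$), and that $\kappa\neq 0$ is being used when you cancel it to obtain $\langle N,U\rangle=0$.
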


\noindent

Definition 2 and Theorem 3 immediately give the following corollary:

\begin{corollary}
	\noindent\textit{Suppose that the curve }$\alpha$\textit{ is a curve\ in
	}$\mathit{G.}$ \textit{Being a general helix in $G$ of }$\alpha$ \textit{is a
	necessary and sufficient condition of being $H=\mathrm{constant}$.}
\end{corollary}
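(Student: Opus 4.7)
The plan is to derive the corollary as an immediate algebraic consequence of combining Definition 2 with Theorem 3; no genuine geometry beyond these two statements is required, so the proof is essentially a one-line rearrangement done in each direction.

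First I would recall the defining relation $H=\frac{\tau-\tau_{G}}{\kappa}$ from Definition 2, which presupposes $\kappa\neq 0$ (as usual for a Frenet curve), so the quotient is well defined along $\alpha$. Then, assuming $\alpha$ is a general helix in $G$, I would invoke Theorem 3 to get a constant $c\in\mathbb{R}$ with $\tau=c\kappa+\tau_{G}$; subtracting $\tau_{G}$ from both sides and dividing by $\kappa$ yields $H=c$, a constant. This handles the necessity direction.

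For the sufficiency direction, I would start from the hypothesis $H=c$ constant, multiply through by $\kappa$ to obtain $\tau-\tau_{G}=c\kappa$, i.e.\ $\tau=c\kappa+\tau_{G}$, and then apply the ``if'' part of Theorem 3 to conclude that $\alpha$ is a general helix in $G$. Both implications therefore reduce to the same algebraic identity read in opposite directions.

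There is no real obstacle here; the only small care-point is the implicit assumption $\kappa\neq 0$ that makes $H$ meaningful, which is consistent with the framework set up in Section 2 when the Frenet apparatus $(T,N,B,\kappa,\tau)$ is defined. Thus the corollary is an immediate restatement of Theorem 3 in terms of the harmonic curvature function introduced in Definition 2.
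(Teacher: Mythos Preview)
Your proposal is correct and matches the paper's approach exactly: the paper simply remarks that ``Definition 2 and Theorem 3 immediately give the following corollary'' and offers no further argument. Your write-up just makes that immediate algebraic rearrangement explicit, which is precisely what the paper intends.
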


\begin{theorem}
	\textbf{\textit{ }\cite{12}}\textit{ Suppose that $\alpha$ is an arc length
		parametrized curve with the Frenet apparatus $(T,N,B,\kappa,\tau)$ in $G$ and
		$H$ is the harmonic curvature function of the curve $\alpha.$Then $\alpha$ is
		a slant helix if and only if the function }
	\begin{equation}
	\sigma=\frac{\kappa(1+H^{2})^{3/2}}{H^{\prime}}, \label{6}%
	\end{equation}
	\textit{is a constant.}
\end{theorem}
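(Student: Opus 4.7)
The plan is to mimic the Izumiya--Takeuchi characterization of slant helices, adapted to the Lie-group setting where the Frenet equations carry $\tau-\tau_{G}$ in place of $\tau$. I will use the definition that $\alpha$ is a slant helix precisely when its principal normal $N$ makes a constant angle with some unit vector field $U$ along $\alpha$ that is parallel, i.e.\ $D_{T}U=0$. Writing $U=u_{1}T+u_{2}N+u_{3}B$, the constancy of $\langle N,U\rangle$ forces $u_{2}=\cos\theta_{0}$ for some constant $\theta_{0}$, and the condition $D_{T}U=0$ combined with the Frenet formulas yields three scalar ODEs in $u_{1},u_{2},u_{3}$.

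The key computational step is the middle ODE: with $\dot{u}_{2}=0$ it becomes $u_{1}\kappa=u_{3}(\tau-\tau_{G})=u_{3}H\kappa$, so $u_{1}=Hu_{3}$. Combined with $u_{1}^{2}+u_{2}^{2}+u_{3}^{2}=1$ this determines
\[
u_{3}=\pm\frac{\sin\theta_{0}}{\sqrt{1+H^{2}}},\qquad u_{1}=\pm\frac{H\sin\theta_{0}}{\sqrt{1+H^{2}}}.
\]
Using the identity $\dfrac{d}{ds}\!\left(\dfrac{H}{\sqrt{1+H^{2}}}\right)=\dfrac{H'}{(1+H^{2})^{3/2}}$, the first ODE $\dot{u}_{1}=u_{2}\kappa$ collapses to
\[
\pm\sin\theta_{0}\,\frac{H'}{(1+H^{2})^{3/2}}=\kappa\cos\theta_{0},
\]
which is exactly $\sigma=\kappa(1+H^{2})^{3/2}/H'=\pm\tan\theta_{0}$, a constant. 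A short computation shows the third ODE reduces to the same equality and is therefore automatic, so no new information is lost.

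For the converse, assume $\sigma\equiv c$. I would set $\tan\theta_{0}=\pm c$, define $U$ by the explicit formulas above with $u_{2}=\cos\theta_{0}$, and verify directly from the Frenet equations (in their $\tau_{G}$-corrected form) that $D_{T}U=0$; then $\langle N,U\rangle=\cos\theta_{0}$ holds by construction, witnessing the slant helix property. The only real bookkeeping obstacle is keeping track of $\tau_{G}$ so that it is cleanly absorbed into the harmonic curvature $H$ at each step; once the substitution $\tau-\tau_{G}=\kappa H$ is used systematically, the argument becomes the direct Lie-group analogue of the classical Euclidean proof.
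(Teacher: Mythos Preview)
The paper does not prove this theorem; it is quoted from \cite{12} as a preliminary result, so there is no in-paper argument to compare against.

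Your outline is the natural Lie-group adaptation of the Izumiya--Takeuchi proof and the computations you sketch are correct, but one point of terminology needs fixing. The ``fixed direction'' in the Lie-group setting is a \emph{left-invariant} vector field, characterised by $\dot U=0$ in the sense of the paper's Eq.~(\ref{3}) and the remark after it, not a parallel field with $D_{T}U=0$. These two conditions differ by $\tfrac12[T,U]$. It is $\dot U=0$ that, together with the $\tau-\tau_{G}$ form of the Frenet formulas, yields the middle ODE $u_{1}\kappa=u_{3}(\tau-\tau_{G})$ you use. Had you genuinely imposed $D_{T}U=0$, the covariant Frenet formulas $D_{T}N=-\kappa T+\tau B$, $D_{T}B=-\tau N$ would put $\tau$ rather than $\tau-\tau_{G}$ in that equation, and the argument would produce the Euclidean slant-helix invariant $\kappa(1+(\tau/\kappa)^{2})^{3/2}/(\tau/\kappa)'$ instead of $\sigma$. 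So the substance of your argument is right; just relabel the hypothesis on $U$ as left-invariance ($\dot U=0$) rather than parallelism, and the proof goes through exactly as you describe.
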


\begin{theorem}
	\cite{11}\textit{ Suppose that the curve $\alpha(s)$ is a Bertrand curve in
		$G$ such that the parameter }$s$ is \textit{the arc length parameter of
		$\alpha(s)$ and the Frenet apparatus of $\alpha(s)$ are }$(T,N,B,\kappa,\tau
	)$.\textit{ For all $s\in I$, $\alpha$ satisfy the equation $\lambda
		\kappa(s)+\mu\kappa(s)H(s)=1$, where $\lambda,\mu$ are constants and $H$ is
		harmonic curvature function of $\alpha(s)$. If the curve $\beta(s)$ is given
		by $\beta(s)=\alpha(s)+\lambda N(s)$ for all $s\in I$, then $(\alpha,\beta)$
		is the Bertrand curve pair.}
\end{theorem}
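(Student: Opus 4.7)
The plan is to start from the definition $\beta(s)=\alpha(s)+\lambda N(s)$, compute its tangent vector using the Frenet equations in $G$, and then show that the principal normal of $\beta$ at $\beta(s)$ is parallel to $N(s)$ so that the principal normal lines of $\alpha$ and $\beta$ coincide at the corresponding pair of points.

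First I would differentiate with respect to the arc length $s$ of $\alpha$ and apply the Frenet formulas in three dimensional Lie groups recalled in the preliminaries, which give $N'=-\kappa T+(\tau-\tau_{G})B$. Writing $\tau-\tau_{G}=\kappa H$ via Definition 2, this yields
\[
\beta'(s)=(1-\lambda\kappa)T+\lambda\kappa H\, B.
\]
Now the assumed Bertrand relation $\lambda\kappa+\mu\kappa H=1$ rewrites $1-\lambda\kappa$ as $\mu\kappa H$, so $\beta'(s)=\kappa H(\mu T+\lambda B)$. Thus the unit tangent of $\beta$ is $T_{\beta}=(\mu T+\lambda B)/\sqrt{\mu^{2}+\lambda^{2}}$, and in particular lies in the plane $\mathrm{span}\{T,B\}$ at each point, i.e.\ $\langle T_{\beta},N\rangle=0$.

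Next I would compute the derivative $dT_{\beta}/ds$, again by the Frenet formulas: since $T'=\kappa N$ and $B'=-(\tau-\tau_{G})N$, both $\mu T'$ and $\lambda B'$ are multiples of $N$, so
\[
\frac{dT_{\beta}}{ds}=\frac{\mu\kappa-\lambda(\tau-\tau_{G})}{\sqrt{\mu^{2}+\lambda^{2}}}\,N.
\]
Converting to the arc length $s_{\beta}$ of $\beta$ via the chain rule only rescales this by the positive factor $1/\|\beta'(s)\|$, so $dT_{\beta}/ds_{\beta}$ is still a scalar multiple of $N(s)$. Consequently the principal normal $N_{\beta}$ of $\beta$ at $\beta(s)$ is collinear with $N(s)$.

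Finally, since $\beta(s)-\alpha(s)=\lambda N(s)$ also lies along $N(s)$, the normal line to $\beta$ at $\beta(s)$ and the normal line to $\alpha$ at $\alpha(s)$ pass through the same point with the same direction, hence coincide. By the very definition of Bertrand mates recalled in the Introduction, $(\alpha,\beta)$ is a Bertrand curve pair. The main point that requires care is ensuring that $\kappa H\neq 0$ on a dense set so that $T_{\beta}$ is well-defined and the expression $(\mu T+\lambda B)/\sqrt{\mu^2+\lambda^2}$ is genuinely a unit vector; this follows because if $\kappa H\equiv 0$ then the Bertrand relation $\lambda\kappa+\mu\kappa H=1$ would force $\lambda\kappa=1$, reducing to a planar subcase that is handled identically.
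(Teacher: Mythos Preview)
The paper does not supply a proof of this statement; it is quoted from \cite{11} as a preliminary result. Your argument is correct and is exactly the standard computation one expects in that reference: differentiate $\beta=\alpha+\lambda N$ using the Lie-group Frenet relation $\dot N=-\kappa T+(\tau-\tau_G)B$, invoke $\lambda\kappa+\mu\kappa H=1$ to write $T_\beta$ as a \emph{constant}-coefficient combination $(\mu T+\lambda B)/\sqrt{\mu^2+\lambda^2}$, and differentiate once more (using $\dot T=\kappa N$, $\dot B=-(\tau-\tau_G)N$) to see that $N_\beta$ is collinear with $N$.

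One small point deserves tightening. Your final sentence about the case $\kappa H\equiv 0$ is not right: if $H\equiv 0$ then the relation forces $\lambda\kappa\equiv 1$, and your own formula gives $\beta'(s)=(1-\lambda\kappa)T+\lambda\kappa H\,B=0$, so $\beta$ collapses to a single point rather than being ``handled identically''. Likewise, for $N_\beta$ to exist you need $\mu\kappa-\lambda(\tau-\tau_G)=\kappa(\mu-\lambda H)\not\equiv 0$, i.e.\ $H\not\equiv\mu/\lambda$, which excludes the general-helix case. These are standard nondegeneracy assumptions tacitly present in Bertrand-curve statements; just state them explicitly instead of claiming the degenerate cases go through.
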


\begin{theorem}
	\textbf{\cite{11} }Suppose that \textit{$(\mathit{\alpha,\beta)}$ is a
		Bertrand curve pair in $G.$ Then, being slant helix of $\alpha$ is a necessary
		and sufficient condition of being a slant helix of }$\beta$.
\end{theorem}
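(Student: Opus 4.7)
The plan is to establish that the slant-helix functional $\sigma$ from Theorem 5 transforms under the Bertrand correspondence merely by a sign, so its constancy is preserved, and then to invoke Theorem 5 in both directions.

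First I would use Theorem 6 to write $\beta(s) = \alpha(s) + \lambda N(s)$ with the relation $\lambda\kappa + \mu\kappa H = 1$. Differentiating and using the $G$-Frenet formulas (where the effective torsion is $\tau-\tau_G = \kappa H$) gives
\[
\beta'(s) = (1-\lambda\kappa)\,T + \lambda(\tau-\tau_G)\,B = \kappa H\,(\mu T + \lambda B).
\]
Setting $\delta = \sqrt{\mu^2+\lambda^2}$ and introducing the constant angle $\theta$ by $\cos\theta = \mu/\delta$, $\sin\theta = \lambda/\delta$, one obtains the unit tangent $T^{*} = \cos\theta\, T + \sin\theta\, B$ together with $ds^{*}/ds = |\kappa H|\,\delta$. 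The Bertrand property forces $N^{*} = \pm N$, and orienting the frame in $G$ then gives $B^{*} = \mp\sin\theta\, T \pm \cos\theta\, B$.

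Next I would compute $\kappa^{*}$ and the effective torsion $\tau^{*}-\tau_{G}^{*}$ of $\beta$ by applying $D_{T^{*}} = (\kappa H\delta)^{-1} D_T$ to $T^{*}$ and $N^{*}$ and reading off the coefficients in the new frame. The result is
\[
\kappa^{*} = \frac{\cos\theta - H\sin\theta}{H\,\delta},\qquad \tau^{*}-\tau_{G}^{*} = \frac{\sin\theta + H\cos\theta}{H\,\delta},
\]
up to the sign coming from $N^{*}=\pm N$. Dividing these by Definition 2 yields the key angle-addition formula
\[
H^{*} = \frac{\sin\theta + H\cos\theta}{\cos\theta - H\sin\theta},
\]
from which the trigonometric identity $1+H^{*\,2} = (1+H^{2})/(\cos\theta - H\sin\theta)^{2}$ and the chain rule calculation $dH^{*}/ds^{*} = H'/\bigl(\kappa H\delta\,(\cos\theta - H\sin\theta)^{2}\bigr)$ follow by a short differentiation.

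Plugging these into the expression for $\sigma^{*}$ from Theorem 5, the factors of $(\cos\theta - H\sin\theta)$ and $\kappa H\delta$ collapse and one finds $\sigma^{*} = \pm\sigma$. In particular $\sigma^{*}$ is constant if and only if $\sigma$ is, so by Theorem 5 applied to both $\alpha$ and $\beta$ the equivalence is proved. The main obstacle I anticipate is not conceptual but bookkeeping: I must track the sign ambiguities in $N^{*}=\pm N$, in the possibly changing sign of $\cos\theta - H\sin\theta$, and in $\kappa H$, and verify they cancel or are absorbed into the $\pm$ in $\sigma^{*}=\pm\sigma$; it is also important to check that the Lie-group contribution $\tau_{G}^{*}$ for $\beta$ is accounted for automatically by differentiating in the ambient bi-invariant metric, rather than needing a separate computation from the bracket structure.
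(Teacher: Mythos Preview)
The paper does not prove this statement: it is quoted from \cite{11} as a background result in the Preliminaries section and no argument is supplied, so there is nothing in the present paper to compare your proposal against.

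That said, your outline is the natural one and is essentially correct. Using the Bertrand relation $\lambda\kappa+\mu\kappa H=1$ to obtain $\beta'=\kappa H(\mu T+\lambda B)$, recognising that the Frenet frame of $\beta$ is a constant rotation of that of $\alpha$ in the $T$--$B$ plane by a fixed angle $\theta$, deriving the tangent-addition formula for $H^{*}$, and deducing $\sigma^{*}=\pm\sigma$ is exactly how this equivalence is established in \cite{11} and in the Euclidean prototype. One small refinement of your last remark: the cleanest way to see that $\tau_G^{*}$ causes no trouble is not to leave it implicit but to check directly that $[T^{*},N^{*}]=2\tau_G B^{*}$, using $[T,N]=2\tau_G B$, $[T,B]=-2\tau_G N$ together with the companion identity $[N,B]=2\tau_G T$ (which follows from bi-invariance); this gives $\tau_G^{*}=\tau_G$ at corresponding parameter values, after which your dot-derivative computation for $\kappa^{*}$ and $\tau^{*}-\tau_G^{*}$ goes through verbatim. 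The sign bookkeeping you flag (from $N^{*}=\pm N$, from the sign of $\kappa H$, and from the possible sign of $\cos\theta-H\sin\theta$) indeed collapses to a global $\pm$ on intervals where $\kappa^{*}\neq 0$, which suffices for the constancy statement.
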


\begin{remark}
	\textbf{ \cite{5} }Suppose that $G$ is a Lie group with a bi-invariant metric
	such that $\left\langle ,\right\rangle $ is a bi-invariant metric on $G.$
	Hence, the following items can be written in various Lie groups:
	
	\textit{i)} If $G$ is abelian group, $\tau_{G}=0$.
	
	\textit{ii)} If $G$ is $SO^{3},$ $\tau_{G}=\frac{1}{2}$.
	
	\textit{iii)} If $G$ is $SU^{3},$ $\tau_{G}=1$.
\end{remark}

\section{Spherical Indicatrices of a Bertrand Curve in Three Dimensional Lie Groups}
In this chapter, the spherical indicatrices of a Bertrand curve with respect
to its partner curve in $G$ with a bi-invariant metric are presented and given
some significant results by using the features of the curves.

\begin{theorem}
	Suppose that $(\alpha,\tilde{\alpha})$ is a Bertrand curve pair with
	arc-length parameter $s$ and $s^{\ast}$, respectively and the Frenet
	invariants of $\alpha$ and $\tilde{\alpha}$ are denoted by $\{T,N,B,\kappa
	,\tau-\tau_{G}\}$ and $\{\tilde{T},\tilde{N},\tilde{B},\tilde{\kappa}%
	,\tilde{\tau}-\tilde{\tau}_{G}\}$, respectively. Then the relationship between
	the Frenet invariants of $\alpha$ and $\tilde{\alpha}$ is given as follows:%
	\[
	T=\frac{-1}{\sqrt{1+\rho^{2}}}\{\tilde{T}-\rho\tilde{B}\},\text{\ }%
	N=\epsilon\tilde{N},\text{\ }B=\frac{-\epsilon}{\sqrt{1+\rho^{2}}}\{\rho
	\tilde{T}+\tilde{B}\}
	\]
	and%
	\[
	\kappa=\frac{-\epsilon\tilde{\kappa}(1+\tilde{H}\rho)(\rho-\tilde{H})}%
	{\tilde{H}(1+\rho^{2})},\text{ }\tau-\tau_{G}=\frac{\tilde{\kappa}(\rho
		-\tilde{H})^{2}}{\tilde{H}(1+\rho^{2})},
	\]
	where $\tilde{H}$ is a harmonic curvature function of the curve $\tilde
	{\alpha}$ and $\rho=\frac{(\tilde{\tau}-\tilde{\tau}_{G})^{^{\prime}}}%
	{\tilde{\kappa}^{^{\prime}}}$.
\end{theorem}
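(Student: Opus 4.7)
The plan is to exploit three consequences of the Bertrand hypothesis in $G$. First, the shared principal normal line forces $N=\epsilon\tilde N$ with $\epsilon=\pm 1$. Second, the characterization of Bertrand pairs recalled earlier yields a nonzero constant $\lambda$ with $\alpha(s)=\tilde\alpha(s^{\ast})-\lambda\epsilon\,\tilde N(s^{\ast})$. Third, the Bertrand angle $\theta$ between $T$ and $\tilde T$ is constant along the curve. With these in hand I will expand $T$ and $B$ in the $\{\tilde T,\tilde B\}$-plane, identify the angle with $\rho$ through a single differentiation, and then read $\kappa$ and $\tau-\tau_{G}$ off the Lie-group Frenet equations.

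For the frame relations I would start from $T=\cos\theta\,\tilde T+\sin\theta\,\tilde B$, which is permitted because $T\perp N=\epsilon\tilde N$. Differentiating with respect to $s^{\ast}$ and applying the tilded Frenet formulas yields
\begin{equation*}
\frac{dT}{ds^{\ast}}=-\theta^{\prime}\sin\theta\,\tilde T+\bigl[\cos\theta\,\tilde\kappa-\sin\theta(\tilde\tau-\tilde\tau_{G})\bigr]\tilde N+\theta^{\prime}\cos\theta\,\tilde B.
\end{equation*}
Since $dT/ds^{\ast}$ is a scalar multiple of $N=\epsilon\tilde N$, the $\tilde T$- and $\tilde B$-components above must vanish, forcing $\theta^{\prime}\equiv 0$. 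Right-handedness of the two positively oriented orthonormal frames then immediately gives $B=\epsilon(-\sin\theta\,\tilde T+\cos\theta\,\tilde B)$.

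To pin $\theta$ down, I would differentiate $\alpha(s)=\tilde\alpha(s^{\ast})-\lambda\epsilon\,\tilde N$ with respect to $s^{\ast}$, apply the tilded Frenet equations, and project on $\tilde T$ and $\tilde B$:
\begin{equation*}
\cos\theta\,\frac{ds}{ds^{\ast}}=1+\lambda\epsilon\tilde\kappa,\qquad\sin\theta\,\frac{ds}{ds^{\ast}}=-\lambda\epsilon(\tilde\tau-\tilde\tau_{G}).
\end{equation*}
Division yields $\tan\theta\,(1+\lambda\epsilon\tilde\kappa)=-\lambda\epsilon(\tilde\tau-\tilde\tau_{G})$; because $\theta$ and $\lambda$ are constants, differentiating this identity in $s^{\ast}$ collapses it to $\tan\theta=-(\tilde\tau-\tilde\tau_{G})^{\prime}/\tilde\kappa^{\prime}=-\rho$. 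Choosing the branch $\cos\theta=-1/\sqrt{1+\rho^{2}}$ and $\sin\theta=\rho/\sqrt{1+\rho^{2}}$ (the opposite branch is absorbed into the sign of $\epsilon$) reproduces the claimed expressions for $T$, $N$, $B$.

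For the curvature and torsion I would then compute $D_{T}T$ and $D_{T}B$ directly from these expressions. Because $\rho$, $\cos\theta$, $\sin\theta$ are constant, each differentiation reduces to applying the tilded Frenet equations and extracting the $\tilde N$-component; the scalar factor $ds^{\ast}/ds=(\rho-\tilde H)/(\sqrt{1+\rho^{2}}\,\tilde H)$ needed at this step emerges from the two projection equations above once $\lambda\epsilon$ is eliminated to give $\lambda\epsilon=\rho/[\tilde\kappa(\tilde H-\rho)]$. Substitution and simplification then produce the stated closed forms for $\kappa$ and $\tau-\tau_{G}$. The main obstacle is bookkeeping: juggling the two arc-length parameters, the sign $\epsilon$, and the $\tau_{G}$-correction so that the factors $(1+\tilde H\rho)$, $(\rho-\tilde H)$, and $\tilde H(1+\rho^{2})$ fall out in exactly the announced configuration.
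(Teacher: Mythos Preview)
Your argument is correct and uses the same ingredients as the paper's proof --- differentiate the relation $\alpha=\tilde\alpha-\lambda\epsilon\tilde N$, exploit that $dT/ds^{\ast}$ must be parallel to $\tilde N$, and solve for the constant $\lambda$ in terms of $\rho$ and $\tilde H$ before substituting back --- but you organize the computation differently. The paper never names the Bertrand angle: it writes $T$ directly as the normalized combination $(1+\lambda\epsilon\tilde\kappa)\tilde T-\lambda\epsilon(\tilde\tau-\tilde\tau_G)\tilde B$, differentiates this expression (square root and all), and sets the $\tilde T$- and $\tilde B$-components of $T'$ to zero to obtain the single algebraic equation that determines $\lambda$. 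You instead parametrize by $\theta$, prove $\theta'=0$ first, and then recover $\tan\theta=-\rho$ by differentiating the ratio of the two projection equations. Your route avoids differentiating through the normalizing square root and makes the constancy of the angle transparent, at the cost of introducing $\theta$ as an auxiliary unknown; the paper's route is a single brute-force differentiation. Both lead to the same formula $\lambda\epsilon=\rho/[\tilde\kappa(\tilde H-\rho)]$ and thereafter the computations coincide.
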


\begin{proof}
	Since $\alpha$ and $\tilde{\alpha}$ are the Bertrand curves, then%
	\begin{equation}
	\alpha(s)=\tilde{\alpha}(s^{\ast})-\lambda\epsilon\tilde{N} \label{7}%
	\end{equation}
	Differentiating the Eq. (7) according to s and using the Eq. (3), we have%
	\begin{equation}
	T\frac{ds}{ds^{\ast}}=\frac{d\tilde{\alpha}(s^{\ast})}{ds}-\lambda
	\epsilon(D_{\tilde{T}}\tilde{N}-\frac{1}{2}\left[  \tilde{T},\tilde{N}\right]
	) \label{8}%
	\end{equation}
	By using the Frenet formulas, we get%
	\begin{equation}
	T\frac{ds}{ds^{\ast}}=\tilde{T}(1+\lambda\epsilon\tilde{\kappa})-\lambda
	\epsilon(\tilde{\tau}-\tilde{\tau}_{G})\tilde{B} \label{9}%
	\end{equation}
	which gives us%
	\begin{equation}
	\frac{ds^{\ast}}{ds}=\frac{1}{\sqrt{(1+\lambda\epsilon\tilde{\kappa}%
			)^{2}+\lambda^{2}(\tilde{\tau}-\tilde{\tau}_{G})^{2}}}\text{.} \label{10}%
	\end{equation}
	From the Eq. (9) and (10), we have%
	\begin{equation}
	T=\frac{1}{\sqrt{(1+\lambda\epsilon\tilde{\kappa})^{2}+\lambda^{2}(\tilde
			{\tau}-\tilde{\tau}_{G})^{2}}}\tilde{T}(1+\lambda\epsilon\tilde{\kappa
	})-\lambda\epsilon(\tilde{\tau}-\tilde{\tau}_{G})\tilde{B} \label{11}%
	\end{equation}
	Since $B=T\times N$ , from (11) it is obtained that%
	\begin{equation}
	B=\frac{1}{\sqrt{(1+\lambda\epsilon\tilde{\kappa})^{2}+\lambda^{2}(\tilde
			{\tau}-\tilde{\tau}_{G})^{2}}}\tilde{T}(\lambda\epsilon(\tilde{\tau}%
	-\tilde{\tau}_{G}))+(1+\lambda\epsilon\tilde{\kappa})\tilde{B} \label{12}
	\end{equation}
	By taking the derivative of Eq.(11) according to $s$ and considering Frenet
	formulas, we have
	\begin{equation}
	\begin{aligned}
	T^{^{\prime}}\frac{ds}{ds^{\ast}}  &  =\left[  \left(  \lambda\epsilon
	\tilde{\kappa}^{^{\prime}}\right)  ((1+\lambda\epsilon\tilde{\kappa}%
	)^{2}+\lambda^{2}(\tilde{\tau}-\tilde{\tau}_{G})^{2})\right] \nonumber\\
	&  -\left[  (1+\lambda\epsilon\tilde{\kappa})\left[  (1+\lambda\epsilon
	\tilde{\kappa})\lambda\epsilon\tilde{\kappa}^{^{\prime}}+\lambda^{2}%
	(\tilde{\tau}-\tilde{\tau}_{G})^{^{\prime}}(\tilde{\tau}-\tilde{\tau}%
	_{G})\right]  \right]  \tilde{T}+\\
	&  \left[  \left(  \tilde{\kappa}(1+\lambda\epsilon\tilde{\kappa}%
	)+\lambda\epsilon(\tilde{\tau}-\tilde{\tau}_{G})^{2}\right)  ((1+\lambda
	\epsilon\tilde{\kappa})^{2}+\lambda^{2}(\tilde{\tau}-\tilde{\tau}_{G}%
	)^{2})\right]  \tilde{N}+\\
	&  \left[
	\begin{array}
	[c]{c}%
	\left(  -\lambda\epsilon(\tilde{\tau}-\tilde{\tau}_{G})^{^{\prime}}\right)
	((1+\lambda\epsilon\tilde{\kappa})^{2}+\lambda^{2}(\tilde{\tau}-\tilde{\tau
	}_{G})^{2})+\\
	\left[  (1+\lambda\epsilon\tilde{\kappa})\left[  (\lambda\epsilon(\tilde{\tau
	}-\tilde{\tau}_{G}))\lambda\epsilon\tilde{\kappa}^{^{\prime}}+\lambda
	^{2}(\tilde{\tau}-\tilde{\tau}_{G})^{^{\prime}}(\tilde{\tau}-\tilde{\tau}%
	_{G})\right]  \right]
	\end{array}
	\right]  \tilde{B}
	\end{aligned}
	\end{equation}
	we know that $N$ and $\tilde{N}$ are linearly dependent. Then from the last equation we have%
	\begin{equation}\begin{aligned}
		&\left[  (\lambda\epsilon\tilde{\kappa}^{^{\prime}})((1+\lambda\epsilon
	\tilde{\kappa})^{2}+\lambda^{2}(\tilde{\tau}-\tilde{\tau}_{G})^{2})\right] \\
	&-\left[  (1+\lambda\epsilon\tilde{\kappa})\left[  (1+\lambda\epsilon
	\tilde{\kappa})\lambda\epsilon\tilde{\kappa}^{^{\prime}}+\lambda^{2}%
	(\tilde{\tau}-\tilde{\tau}_{G})^{^{\prime}}(\tilde{\tau}-\tilde{\tau}%
	_{G})\right]  \right]  =0\text{.}
	\end{aligned}
	\end{equation}
	From the last equation, we get%
	\begin{equation}
	\lambda=\frac{-\epsilon\rho}{\tilde{\kappa}(\rho-\tilde{H})} \label{17}%
	\end{equation}
	where $\rho=\frac{(\tilde{\tau}-\tilde{\tau}_{G})^{^{\prime}}}{\tilde{\kappa
		}^{^{\prime}}}$ and $\tilde{H}=\frac{\tilde{\tau}-\tilde{\tau}_{G}}%
	{\tilde{\kappa}}$.
	
	If substituting the Eq. (14) in the Eq. (11) and (12), we have
	\begin{align}
	T  &  =\frac{-1}{\sqrt{1+\rho^{2}}}\{\tilde{T}-\rho\tilde{B}\}\text{,
	}\label{18}\\
	\text{\ }B  &  =\frac{-\epsilon}{\sqrt{1+\rho^{2}}}\{\rho\tilde{T}+\tilde{B}\}
	\label{19}%
	\end{align}
	By using Frenet formulas and with the help of the Proposition 2.1, we get
	$T^{^{\prime}}=\kappa N$. \ By considering the Eq. (15) we obtain%
	\begin{equation}
	\kappa=\frac{-\epsilon\tilde{\kappa}(1+\tilde{H}\rho)(\rho-\tilde{H})}%
	{\tilde{H}(1+\rho^{2})}\text{.} \label{20}%
	\end{equation}
	Since $\tau^{^{\prime}}=\left\langle B^{^{\prime}},N\right\rangle $, we get
	$\tau-\tau_{G}=\frac{\tilde{\kappa}(\rho-\tilde{H})^{2}}{\tilde{H}(1+\rho
		^{2})}$. Hence, our theorem is proved.
\end{proof}

Thus, the geodesic curvature of the principal image of the principal normal
indicatrix of $\alpha$ is given by%
\begin{equation}
\Gamma=\frac{-\tilde{\kappa}(\rho-\tilde{H})}{\tilde{\kappa}^{2}(1+\tilde
	{H}^{2})^{\frac{3}{2}}}\text{.} \label{21}%
\end{equation}

\begin{corollary}
	$\alpha$ and $\tilde{\alpha}$ be a Bertrand curve mate with arc-length
	parameter $s$ and $s^{\ast}$, respectively. Then the relationship between arc
	length parameters $s$ and $s^{\ast}$ is given by
\end{corollary}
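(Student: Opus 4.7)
The plan is to combine two identities already derived inside the proof of Theorem~3.1. On the one hand, equation~(10) of that proof gives
$$\frac{ds^{*}}{ds}=\frac{1}{\sqrt{(1+\lambda\epsilon\tilde{\kappa})^{2}+\lambda^{2}(\tilde{\tau}-\tilde{\tau}_{G})^{2}}},$$
and on the other hand the explicit value
$$\lambda=\frac{-\epsilon\rho}{\tilde{\kappa}(\rho-\tilde{H})}$$
was obtained as a by-product of the linear dependence of $N$ and $\tilde{N}$. Substituting the second into the first and simplifying with the defining identity $\tilde{\tau}-\tilde{\tau}_{G}=\tilde{\kappa}\tilde{H}$ produces a compact closed-form expression for $ds^{*}/ds$, from which $s^{*}$ follows by a single integration in $s$.

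Concretely I would proceed in three short steps. First, compute $1+\lambda\epsilon\tilde{\kappa}$; after the $\tilde{\kappa}$-cancellation this collapses to $1-\rho/(\rho-\tilde{H})=-\tilde{H}/(\rho-\tilde{H})$, so $(1+\lambda\epsilon\tilde{\kappa})^{2}=\tilde{H}^{2}/(\rho-\tilde{H})^{2}$. Second, compute $\lambda^{2}(\tilde{\tau}-\tilde{\tau}_{G})^{2}$: the $\tilde{\kappa}^{2}$ in the denominator of $\lambda^{2}$ meets the factor $\tilde{\kappa}^{2}\tilde{H}^{2}$ produced by squaring $\tilde{\kappa}\tilde{H}$, leaving $\rho^{2}\tilde{H}^{2}/(\rho-\tilde{H})^{2}$. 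Third, add the two and factor out $\tilde{H}^{2}/(\rho-\tilde{H})^{2}$; taking the square root yields
$$\frac{ds^{*}}{ds}=\frac{|\rho-\tilde{H}|}{|\tilde{H}|\sqrt{1+\rho^{2}}}.$$
The claimed relation between the arc-length parameters is then recorded as
$$s^{*}=\int \frac{\rho-\tilde{H}}{\tilde{H}\sqrt{1+\rho^{2}}}\,ds,$$
up to an additive integration constant and a global sign fixed by the orientation convention for $\epsilon$.

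No step here is a genuine obstacle: the whole derivation is an algebraic simplification of objects already named in the theorem. The only item requiring a bit of care is sign bookkeeping, since $\epsilon=\pm 1$ and the factor $\rho-\tilde{H}$ may change sign along the curve; I would drop the absolute values in accordance with the convention already fixed by the expression for $\lambda$, so that the resulting formula is consistent with the Frenet orientations of $\alpha$ and $\tilde{\alpha}$ used in Theorem~3.1.
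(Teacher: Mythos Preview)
Your derivation is correct and follows exactly the route implicit in the paper: substitute the value of $\lambda$ from (14) into the expression (10) for $ds^{\ast}/ds$ obtained in the proof of Theorem~3.1, and simplify using $\tilde{\tau}-\tilde{\tau}_{G}=\tilde{\kappa}\tilde{H}$. The only cosmetic difference is that the paper records the reciprocal relation, writing $s=\displaystyle\int\frac{\tilde{H}\sqrt{1+\rho^{2}}}{\rho-\tilde{H}}\,ds^{\ast}$ rather than your $s^{\ast}=\displaystyle\int\frac{\rho-\tilde{H}}{\tilde{H}\sqrt{1+\rho^{2}}}\,ds$; these are equivalent.
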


\begin{equation}
s=\int\frac{\tilde{H}\sqrt{1+\rho^{2}}}{\rho-\tilde{H}}ds^{\ast} \label{22}%
\end{equation}
\subsection{Tangent indicatrix $\alpha_{t}$ $=T$ of the Bertrand curve in $G$}
\begin{definition}
	Suppose that the curve $\alpha$ is a regular curve in $G$. The curve
	$\alpha_{t}$ is called the tangent indicatrix of the curve $\alpha$ and
	$\alpha_{t}:I\subset S^{2}\subset g$ is defined by%
	\[
	\alpha_{t}(s_{t})=T(s)\text{.}%
	\]
	Then the tangent indicatrix of a Bertrand curve in three dimensional Lie group
	can be given by%
	\begin{equation}
	\alpha_{t}=\frac{-1}{\sqrt{1+\rho^{2}}}\{\tilde{T}-\rho\tilde{B}\}\text{.}
	\label{23}%
	\end{equation}
	
\end{definition}

Thus, we can give the following characterizations in the view of above equation.
\begin{theorem}
	Suppose that the curve $\alpha$ is a regular curve in $G$ and the Frenet
	apparatus of the tangent indicatrix $\alpha_{t}$ $=T$ of the Bertrand curve
	are denoted by $\{T_{t},N_{t},B_{t},\kappa_{t},(\tau-\tau_{G})_{t}\}$. Then we
	have%
	\begin{equation}
	T_{t}=-\tilde{N}\text{, }N_{t}=\frac{1}{\sqrt{1+\tilde{H}^{2}}}\{\tilde
	{T}-\tilde{H}\tilde{B}\}\text{, }B_{t}=\frac{1}{\sqrt{1+\tilde{H}^{2}}%
	}\{\tilde{H}\tilde{T}+\tilde{B}\}\text{\ } \label{24}%
	\end{equation}
	and%
	\begin{equation}
	s_{t}=-%
	%TCIMACRO{\dint }%
	%BeginExpansion
	{\displaystyle\int}
	%EndExpansion
	\frac{\tilde{\kappa}(\rho-\tilde{H})^{2}}{\tilde{H}(1+\rho)^{2}}ds\text{,
	}\kappa_{t}=\frac{\sqrt{1+\rho^{2}}\sqrt{1+\tilde{H}^{2}}}{\tilde{H}-\rho
}\text{, }(\tau-\tau_{G})_{t}=\frac{-\tilde{\kappa}^{^{\prime}}\sqrt
{1+\rho^{2}}}{\tilde{\kappa}^{2}(1+\tilde{H}^{2})}\text{.} \label{25}%
\end{equation}
and $s_{t}$ is a natural representation of the tangent indicatrix of the curve
$\alpha$. The geodesic curvature of the principal image of the principal
normal indicatrix of $\alpha_{t}(s)$ is given by
\begin{equation}
\Gamma_{t}=\frac{-\tilde{\kappa}^{3}(1+\tilde{H}^{2})^{\frac{3}{2}}%
	(\rho-\tilde{H})^{2}[\tilde{\kappa}^{^{\prime\prime}}\tilde{\kappa}%
	(1+\tilde{H}^{2})-3\tilde{\kappa}^{^{\prime}2}(1+\rho\tilde{H})]}{\sqrt
	{1+\rho^{2}}(\tilde{\kappa}(1+\tilde{H}^{2})^{3}+\tilde{\kappa}^{^{\prime}%
		2}(\tilde{H}-\rho)^{2})^{\frac{3}{2}}}\frac{ds^{\ast}}{ds_{t}} \label{26}%
\end{equation}
where $\frac{ds^{\ast}}{ds_{t}}=\frac{\sqrt{1+\rho^{2}}}{\tilde{\kappa}%
	(\tilde{H}-\rho)}.$
\end{theorem}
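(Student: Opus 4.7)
The plan is to derive the Frenet apparatus of $\alpha_t$ step by step, starting from its definition $\alpha_t(s_t)=T(s)$ together with the expression (23) furnished by Theorem 3.1. First, using (3), $D_T T=\dot T+\frac{1}{2}[T,T]=\dot T=\kappa N$, and substituting $N=\epsilon\tilde N$ from Theorem 3.1, differentiation of $\alpha_t$ with respect to $s$ gives $\frac{d\alpha_t}{ds}=\kappa\epsilon\tilde N$. Taking norms yields $\frac{ds_t}{ds}=|\kappa|$; inserting the value of $\kappa$ from Theorem 3.1 and fixing orientations consistently with (23), one reads off $T_t=-\tilde N$ and integrates to obtain the stated expression for $s_t$.

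Next, I would differentiate $T_t=-\tilde N$ through the chain rule $\frac{d}{ds_t}=\frac{ds^{\ast}}{ds_t}\frac{d}{ds^{\ast}}$, using the Frenet formulas of $\tilde\alpha$ to get $\dot{\tilde N}=-\tilde\kappa\tilde T+(\tilde\tau-\tilde\tau_G)\tilde B=\tilde\kappa(-\tilde T+\tilde H\tilde B)$. Normalizing this vector yields $N_t=\frac{1}{\sqrt{1+\tilde H^{2}}}(\tilde T-\tilde H\tilde B)$, and comparing magnitudes gives $\kappa_t=\tilde\kappa\sqrt{1+\tilde H^{2}}\,\frac{ds^{\ast}}{ds_t}$. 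Substituting $\frac{ds^{\ast}}{ds_t}=\frac{\sqrt{1+\rho^{2}}}{\tilde\kappa(\tilde H-\rho)}$, which I would obtain by composing (10) with the previously computed $\frac{ds_t}{ds}$, delivers the claimed $\kappa_t$. The binormal $B_t=T_t\times N_t$ then follows from the standard identities $\tilde N\times\tilde T=-\tilde B$ and $\tilde N\times\tilde B=\tilde T$, producing the stated $B_t$.

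For $(\tau-\tau_G)_t$, I would differentiate the expression for $B_t$ with respect to $s_t$ (again by passing through $s^{\ast}$) and take the inner product with $N_t$; because $\tilde T,\tilde N,\tilde B$ are orthonormal, only the $\tilde T$ and $\tilde B$ contributions of the Frenet derivatives survive, and routine simplification in terms of $\tilde\kappa,\tilde H,\rho$ produces the stated formula.

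The main obstacle is the geodesic curvature $\Gamma_t$ of the principal normal indicatrix of $\alpha_t$. I would apply to $\alpha_t$ exactly the same procedure that yielded (21) for $\alpha$: form the harmonic curvature $H_t=(\tau-\tau_G)_t/\kappa_t$ and use the proportionality $\Gamma_t\sim H_t'/[\kappa_t(1+H_t^{2})^{3/2}]$ with prime denoting $d/ds_t$. This step forces differentiation of the already complicated $\kappa_t$ and $(\tau-\tau_G)_t$ with respect to $s^{\ast}$ followed by multiplication by $\frac{ds^{\ast}}{ds_t}$; it is precisely here that $\tilde\kappa^{\prime\prime}$ enters and where careful bookkeeping of the compound chain $s\mapsto s^{\ast}\mapsto s_t$ becomes the main source of difficulty. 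After algebraic simplification using the relations among $\rho,\tilde H$ and their derivatives, the claimed expression for $\Gamma_t$ emerges, with the factor $\frac{ds^{\ast}}{ds_t}$ kept unsimplified as in the statement.
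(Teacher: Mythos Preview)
Your proposal is correct and follows exactly the approach the paper has in mind: the paper does not write out an explicit proof of this theorem but presents it as a direct computational consequence of Definition 3.3 and Theorem 3.1, obtained by differentiating $\alpha_t=T$ via the Lie-group Frenet formulas and the chain rule through $s\to s^{\ast}\to s_t$, precisely as you outline. Your identification of where $\tilde\kappa''$ enters (in the differentiation needed for $\Gamma_t$ via $H_t'/[\kappa_t(1+H_t^2)^{3/2}]$, paralleling the derivation of (18)) is also the intended route.
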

After these computations, the following theorem can be written:

\begin{theorem}
	Suppose that $(\alpha,\tilde{\alpha})$ is a non-helical and non-planar
	Bertrand curve pair in three dimensional Lie group.Then the following
	properties hold:
	
	$i)$ If the curve $\alpha$\ is a slant helix, then its necessary and
	sufficient condition is $\alpha_{t}=$ $spherical$ $helix$.
	
	$ii)$ If the curve $\tilde{\alpha}$ is a slant helix, then its necessary and
	sufficient condition\ is\ $\alpha_{t}=$ $spherical$ $helix$.
\end{theorem}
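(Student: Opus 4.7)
The plan is to reduce both statements to the Lie-group slant helix criterion of Theorem 2.5 together with the fact that a unit-speed curve on $S^{2}\subset g$ is a spherical helix precisely when its geodesic curvature is constant. Thus $\alpha_{t}$ is a spherical helix if and only if $\Gamma_{t}\equiv\text{const}$, and both (i) and (ii) reduce to identifying the vanishing of $(\Gamma_{t})^{\prime}$ with a slant-helix condition.

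For part (i), I would work on the $\tilde\alpha$ side, where all of the objects in Eq.\,(26) are already written in terms of $\tilde\kappa$, $\tilde H$ and $\rho$. By Theorem 2.5 applied to $\alpha$, the slant-helix condition is $\sigma=\kappa(1+H^{2})^{3/2}/H^{\prime}\equiv\text{const}$, where $H=(\tau-\tau_{G})/\kappa$. Substituting the expressions for $\kappa$ and $\tau-\tau_{G}$ from Theorem 3.1 yields
\[
H=\frac{-\epsilon(\rho-\tilde H)}{1+\tilde H\rho},
\]
and after converting $d/ds$ to $d/ds^{*}$ via Eq.\,(10) and Eq.\,(17), the numerator $\kappa(1+H^{2})^{3/2}$ and denominator $H^{\prime}$ both become rational expressions in $\tilde\kappa,\tilde H,\rho$ and $\tilde\kappa^{\prime}$. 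I would then differentiate the resulting expression for $\sigma$ once more in $s^{*}$, and compare with $\Gamma_{t}$ as given in Eq.\,(26), which already carries the factor $ds^{*}/ds_{t}=\sqrt{1+\rho^{2}}/(\tilde\kappa(\tilde H-\rho))$. The aim is to show that $\sigma^{\prime}$ and $\Gamma_{t}^{\prime}$ agree up to a non-vanishing multiplicative factor, so that one is identically zero iff the other is.

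For part (ii), I would invoke Theorem 2.7: since $(\alpha,\tilde\alpha)$ is a Bertrand pair, $\tilde\alpha$ is a slant helix iff $\alpha$ is a slant helix. Combined with part (i) this gives the stated equivalence directly, so (ii) requires no new computation. The non-helical, non-planar assumption is used here only to ensure that Theorem 2.7 applies and that the factors $\rho-\tilde H$, $1+\tilde H\rho$, $\tilde\kappa$ which appear in the denominators of $H$, $\kappa$, $\Gamma_{t}$ and $ds^{*}/ds_{t}$ are all non-zero, so that no expression degenerates.

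The main obstacle will be the algebraic identification in part (i): verifying that the bracketed factor $[\tilde\kappa^{\prime\prime}\tilde\kappa(1+\tilde H^{2})-3\tilde\kappa^{\prime 2}(1+\rho\tilde H)]$ appearing in $\Gamma_{t}$ is, up to the non-vanishing prefactors, exactly the numerator of $(\sigma)^{\prime}$ when $\sigma$ is re-expressed in the $\tilde{\phantom{\kappa}}$-variables. This is structurally plausible because the exponent $3/2$ on $(1+\tilde H^{2})$ in $\Gamma_{t}$ matches the exponent $3/2$ on $(1+H^{2})$ in $\sigma$, and the second derivative $\tilde\kappa^{\prime\prime}$ appears precisely where differentiating $H^{\prime}$ a second time would produce it; but the bookkeeping is lengthy and is where the real work of the proof lies.
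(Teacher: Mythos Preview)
Your plan rests on two claims that do not hold, and together they derail the argument.

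First, $\Gamma_{t}$ in Eq.\,(26) is \emph{not} the geodesic curvature of $\alpha_{t}$ as a curve on $S^{2}$. The paper defines it as ``the geodesic curvature of the principal image of the principal normal indicatrix of $\alpha_{t}$''; that is, $\Gamma_{t}$ is the slant-helix invariant of $\alpha_{t}$, one derivative higher than what you need. Constancy of $\Gamma_{t}$ would say $\alpha_{t}$ is a \emph{slant} helix, not a general (spherical) helix. Second, even if you had meant the geodesic curvature of $\alpha_{t}$ itself, the assertion ``a curve on $S^{2}$ is a spherical helix iff its geodesic curvature is constant'' is false: constant geodesic curvature on $S^{2}$ forces a small circle, which is only a degenerate spherical helix. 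The correct criterion here is Corollary~2.4: $\alpha_{t}$ is a general helix (hence a spherical helix, since it lies on $S^{2}\subset g$) if and only if its harmonic curvature $H_{t}=(\tau-\tau_{G})_{t}/\kappa_{t}$ is constant.

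With the right criterion the proof is immediate from Theorem~3.4 and requires none of the differentiation you outline. From the formulas for $\kappa_{t}$ and $(\tau-\tau_{G})_{t}$ there,
\[
H_{t}=\frac{(\tau-\tau_{G})_{t}}{\kappa_{t}}
=\frac{-\tilde{\kappa}^{\prime}(\tilde H-\rho)}{\tilde{\kappa}^{2}(1+\tilde H^{2})^{3/2}}
=\frac{\tilde H^{\prime}}{\tilde{\kappa}(1+\tilde H^{2})^{3/2}}
=\frac{1}{\tilde{\sigma}},
\]
where the middle equality uses $\tilde H^{\prime}=\tilde{\kappa}^{\prime}(\rho-\tilde H)/\tilde{\kappa}$, obtained by differentiating $\tilde H=(\tilde\tau-\tilde\tau_{G})/\tilde\kappa$ and substituting $\rho=(\tilde\tau-\tilde\tau_{G})^{\prime}/\tilde\kappa^{\prime}$. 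Thus $H_{t}$ is constant iff $\tilde{\sigma}$ is constant, which by Theorem~2.5 is exactly the slant-helix condition for $\tilde{\alpha}$; this gives (ii) directly. Part (i) then follows from (ii) via Theorem~2.7, as you correctly noted. No comparison of $\sigma^{\prime}$ with $\Gamma_{t}^{\prime}$ and no second-order bookkeeping is needed; this is precisely why the paper simply writes ``After these computations, the following theorem can be written'' and gives no separate proof.
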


Further the following theorem can be given:
\begin{theorem}
	Suppose that $(\alpha,\tilde{\alpha})$ is a non-helical and non-planar
	Bertrand curve pair in three dimensional Lie group. If the tangent indicatrix
	of $\alpha$\ is a spherical helix, then its necessary and sufficient condition
	is%
	\begin{equation}
	\tilde{\kappa}^{^{\prime\prime}}\tilde{\kappa}(1+\tilde{H}^{2})-3\tilde
	{\kappa}^{^{\prime}2}(1+\rho\tilde{H})=0\text{.} \label{27}%
	\end{equation}
	
\end{theorem}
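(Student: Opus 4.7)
The plan is to reduce the theorem to an immediate inspection of formula \eqref{26}, using the classical Lancret-type characterization that a curve on $S^{2}$ is a spherical (general) helix if and only if its principal normal indicatrix is a geodesic of $S^{2}$, equivalently, has vanishing geodesic curvature on $S^{2}$.

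First I would record the equivalence
\[
\alpha_t \text{ is a spherical helix} \ \Longleftrightarrow\ \Gamma_t = 0.
\]
The direction ``$\Rightarrow$'' is the standard observation that a helix in the ambient $3$-space lying on $S^{2}$ has its tangent making a constant angle with a fixed axis, so its principal normal lies in the plane orthogonal to that axis and the corresponding spherical image is a great circle on $S^{2}$. In the Lie-group setting the argument transfers verbatim once we identify $S^{2}\subset g$ with the round $2$-sphere via the bi-invariant inner product; alternatively, one may bypass the geometric picture by applying the harmonic-curvature characterization of general helices ($H=\mathrm{const}$) to $\alpha_t$ and checking that $H_t^{\prime}=0$ reduces, after simplification, to the same polynomial identity produced by the numerator of $\Gamma_t$.

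Next I would carry out a factor-by-factor analysis of \eqref{26}. Non-planarity gives $\tilde{H}\neq 0$, and since a direct computation yields
\[
\tilde{H}^{\prime} = \frac{\tilde{\kappa}^{\prime}(\rho - \tilde{H})}{\tilde{\kappa}},
\]
the non-helical hypothesis $\tilde{H}^{\prime}\neq 0$ forces both $\tilde{\kappa}^{\prime}\neq 0$ (so that $\rho$ is well-defined) and $\rho-\tilde{H}\neq 0$. The remaining factors $\tilde{\kappa}$, $1+\tilde{H}^{2}$, $\sqrt{1+\rho^{2}}$, the scaling $ds^{*}/ds_t = \sqrt{1+\rho^{2}}/(\tilde{\kappa}(\tilde{H}-\rho))$, and the strictly positive combination $\tilde{\kappa}(1+\tilde{H}^{2})^{3} + \tilde{\kappa}^{\prime 2}(\tilde{H}-\rho)^{2}$ in the denominator of \eqref{26} are all nonzero. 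Hence $\Gamma_t=0$ is equivalent to the vanishing of the bracket
\[
\tilde{\kappa}^{\prime\prime}\tilde{\kappa}(1+\tilde{H}^{2}) - 3\tilde{\kappa}^{\prime 2}(1+\rho\tilde{H}) = 0,
\]
which is exactly \eqref{27}.

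The main obstacle I expect is the first step rather than the algebra: the paper has not explicitly proved ``$\alpha_t$ is a spherical helix $\Leftrightarrow$ $\Gamma_t=0$'' in the Lie-group setting, so one should either cite it from the Euclidean/Lie-group helix literature or verify it directly by differentiating $H_t=(\tau-\tau_G)_t/\kappa_t$ read off from \eqref{25}; the bookkeeping of that derivative is what collapses, after the non-vanishing factors are peeled off, to the same bracket displayed above. Once this equivalence is in place, the remaining part of the proof is the purely mechanical factor inspection summarised in the previous paragraph.
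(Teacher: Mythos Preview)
Your proposal is correct and matches the paper's approach: the paper states this theorem immediately after displaying formula \eqref{26} for $\Gamma_t$ with no separate proof, so the intended argument is precisely the one you spell out---use the equivalence ``$\alpha_t$ is a spherical helix $\Leftrightarrow \Gamma_t=0$'' and then observe that under the non-helical, non-planar hypotheses every factor of \eqref{26} except the bracket is nonzero. Your explicit verification that $\tilde H'=\tilde\kappa'(\rho-\tilde H)/\tilde\kappa\neq 0$ forces $\tilde\kappa'\neq 0$ and $\rho\neq\tilde H$, together with your justification of the helix $\Leftrightarrow\Gamma_t=0$ step, simply make precise what the paper leaves implicit.
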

\subsection{Principal normal indicatrix $\alpha_{n}$ $=N(s)$ of the	Bertrand curve in $G$}
\begin{definition}
	Suppose that the curve $\alpha$ is a regular curve in $G$. The curve
	$\alpha_{n}$ is called the principal normal indicatrix of the curve $\alpha$
	and $\alpha_{n}:I\subset S^{2}\subset g$ is\ defined by%
	\begin{equation}
	\alpha_{n}(s_{n})=N(s)\text{.} \label{28}%
	\end{equation}
	
\end{definition}

From here, we can write the principal normal indicatrix $\alpha_{n}$ $=N(s)$
as follows:
\[
\alpha_{n}=\epsilon\tilde{N}\text{.}%
\]

\begin{theorem}
	Suppose that the curve $\alpha$ is a regular curve in $G$ and the Frenet
	apparatus of the normal indicatrix $\alpha_{n}$ $=\epsilon\tilde{N}$ of the
	Bertrand curve are denoted by $\{T_{n},N_{n},B_{n},\kappa_{n},(\tau-\tau
	_{G})_{n}\}$. Then we have%
	\begin{align}
	T_{n}  &  =\frac{-\epsilon\{\tilde{T}-\tilde{H}\tilde{B}\}}{\sqrt{1+\tilde
			{H}^{2}}}\text{, }\label{29}\\
	N_{n}  &  =\frac{-\epsilon\{\tilde{H}\tilde{\kappa}^{^{\prime}}(\rho-\tilde
		{H})\tilde{T}-\tilde{\kappa}^{2}(1+\tilde{H}^{2})^{2}\tilde{N}-\tilde{\kappa
		}^{^{\prime}}(\tilde{H}-\rho)\tilde{B}\}}{\sqrt{\tilde{\kappa}^{^{\prime}%
			2}(\tilde{H}-\rho)^{2}+\tilde{\kappa}^{4}(1+\tilde{H}^{2})^{3}}\sqrt
	{1+\tilde{H}^{2}}}\label{30}\\
B_{n}  &  =\frac{\tilde{\kappa}^{2}\tilde{H}(1+\tilde{H}^{2})\tilde{T}%
	+\tilde{\kappa}^{^{\prime}}(\rho-\tilde{H})\tilde{N}+\tilde{\kappa}%
	^{2}(1+\tilde{H}^{2})\tilde{B}}{\sqrt{\tilde{\kappa}^{^{\prime}2}(\tilde
		{H}-\rho)^{2}+\tilde{\kappa}^{4}(1+\tilde{H}^{2})^{3}}}\text{\ } \label{31}%
\end{align}
and
\begin{align}
s_{n}  &  =%
%TCIMACRO{\dint }%
%BeginExpansion
{\displaystyle\int}
%EndExpansion
\frac{\tilde{\kappa}(\rho-\tilde{H})\sqrt{1+\tilde{H}^{2}}}{\tilde{H}%
	\sqrt{1+\rho^{2}}}ds\text{, }\kappa_{n}=\frac{\sqrt{\tilde{\kappa}^{^{\prime
			}2}(\tilde{H}-\rho)^{2}+\tilde{\kappa}^{4}(1+\tilde{H}^{2})^{3}}}%
{\tilde{\kappa}^{2}(1+\tilde{H}^{2})^{\frac{3}{2}}}\text{, }\label{32}\\
(\tau-\tau_{G})_{n}  &  =\frac{\epsilon(\tilde{H}-\rho)[(3\tilde{\kappa
	}^{^{\prime}2}-\tilde{\kappa}^{^{\prime\prime}}\tilde{\kappa})(1+\rho
	^{2})-3\tilde{\kappa}^{^{\prime}2}\rho(\tilde{H}-\rho)]}{\tilde{\kappa
	}^{^{\prime}2}(\tilde{H}-\rho)^{2}+\tilde{\kappa}^{4}(1+\tilde{H}^{2})^{3}%
}\text{.} \label{33}%
\end{align}
and $s_{n}$ is a natural representation of the principal normal indicatrix of
the curve $\alpha$.
\end{theorem}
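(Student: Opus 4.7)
The strategy is to mimic the differentiation-and-rescaling proof of Theorem 3.1: since $\alpha_n(s_n)=\epsilon\tilde N$ is expressed directly in the Frenet frame of $\tilde\alpha$, I would differentiate with respect to the arc-length parameter $s^\ast$ of $\tilde\alpha$ and then pass to $s_n$ via the chain rule and the Jacobian $ds_n/ds^\ast$.

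First I apply the Frenet formula $d\tilde N/ds^\ast=-\tilde\kappa\tilde T+(\tilde\tau-\tilde\tau_G)\tilde B$, obtaining $d\alpha_n/ds^\ast=-\epsilon\tilde\kappa(\tilde T-\tilde H\tilde B)$, whose norm is $\tilde\kappa\sqrt{1+\tilde H^2}$. This simultaneously yields the expression for $T_n$ and identifies $ds_n/ds^\ast=\tilde\kappa\sqrt{1+\tilde H^2}$; composing with $ds^\ast/ds=(\rho-\tilde H)/(\tilde H\sqrt{1+\rho^2})$ (read off from Corollary 3.2) delivers the integral formula for $s_n$ in terms of $s$.

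Next, for $\kappa_n$ and $N_n$, I differentiate $T_n$ once more with respect to $s^\ast$, using the product rule, the Frenet formulas for $\tilde\alpha$, and the identity $\tilde\kappa\tilde H'=\tilde\kappa'(\rho-\tilde H)$ which follows immediately from the definitions of $\tilde H$ and $\rho$. After the ensuing cancellations, the $\tilde T,\tilde N,\tilde B$ coefficients can be collected with common factor $[\tilde\kappa^2(1+\tilde H^2)^2]^{-1}$. Dividing by $ds_n/ds^\ast$ and equating with $\kappa_n N_n$, the magnitude calculation — which factors cleanly as $(1+\tilde H^2)[\tilde\kappa^{\prime2}(\rho-\tilde H)^2+\tilde\kappa^4(1+\tilde H^2)^3]$ — yields $\kappa_n$, and then $N_n$ is read off by normalization. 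The binormal $B_n$ is obtained either from $B_n=T_n\times N_n$ via the cross-product rules on the orthonormal frame $\{\tilde T,\tilde N,\tilde B\}$, or equivalently by Gram--Schmidt against $T_n$.

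Finally, $(\tau-\tau_G)_n$ comes from the Frenet relation $dB_n/ds_n=-(\tau-\tau_G)_n N_n$, equivalently $(\tau-\tau_G)_n=-\langle dB_n/ds_n,N_n\rangle$. I expect this last step to be the main obstacle: differentiating $B_n$ introduces $\tilde\kappa''$ through the derivative of the coefficient $\tilde\kappa^2(1+\tilde H^2)$, and projecting onto $N_n$ produces a lengthy expression whose simplification requires repeated use of $\tilde\kappa\tilde H'=\tilde\kappa'(\rho-\tilde H)$ together with the reparametrization factor $ds_n/ds^\ast$. If one keeps the denominator $\tilde\kappa^{\prime2}(\tilde H-\rho)^2+\tilde\kappa^4(1+\tilde H^2)^3$ intact throughout and only simplifies the numerator, the $\tilde T$ and $\tilde B$ contributions cancel against each other and the surviving $\tilde N$-component collapses into the stated $(\tilde H-\rho)[(3\tilde\kappa^{\prime2}-\tilde\kappa''\tilde\kappa)(1+\rho^2)-3\tilde\kappa^{\prime2}\rho(\tilde H-\rho)]$.
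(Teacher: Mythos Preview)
Your approach is correct and matches the paper's method: the paper actually states this theorem without an explicit proof, but your differentiate--normalize--repeat strategy, carried out in the frame of $\tilde\alpha$ with the chain rule $ds_n/ds^\ast=\tilde\kappa\sqrt{1+\tilde H^2}$ and the identity $\tilde\kappa\tilde H'=\tilde\kappa'(\rho-\tilde H)$, is exactly the computational template exhibited in the proof of Theorem~3.1 and in the (also unproved) analogues for $\alpha_t$ and $\alpha_b$. The only cautionary remark is bookkeeping of the sign $\epsilon$ when normalizing $dT_n/ds^\ast$ to obtain $N_n$; otherwise every step you describe lines up with the stated formulas.
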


Further the following theorem can be written:

\begin{theorem}
	Suppose that $(\alpha,\tilde{\alpha})$ is a non-helical and non-planar
	Bertrand curve pair in three dimensional Lie group. If the the principal
	normal indicatrix of $\alpha$\ is planar, then its necessary and sufficient
	condition is%
	\begin{equation}
	(3\tilde{\kappa}^{^{\prime}2}-\tilde{\kappa}^{^{\prime\prime}}\tilde{\kappa
	})(1+\rho^{2})-3\tilde{\kappa}^{^{\prime}2}\rho(\tilde{H}-\rho)=0\text{.}
	\label{34}%
	\end{equation}
\end{theorem}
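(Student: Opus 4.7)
The plan is to read off the result directly from the Frenet data of $\alpha_{n}$ computed in the previous theorem. A curve in a three dimensional Lie group with bi-invariant metric lies in a totally geodesic plane precisely when the quantity $(\tau-\tau_{G})$ attached to it vanishes identically, since this is the coefficient governing the variation of the binormal in the Frenet system displayed in Section~2. Applied to the principal normal indicatrix $\alpha_{n}$, planarity is therefore equivalent to $(\tau-\tau_{G})_{n}=0$.

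First I would invoke the explicit expression
\[
(\tau-\tau_{G})_{n}=\frac{\epsilon(\tilde{H}-\rho)\bigl[(3\tilde{\kappa}'^{\,2}-\tilde{\kappa}''\tilde{\kappa})(1+\rho^{2})-3\tilde{\kappa}'^{\,2}\rho(\tilde{H}-\rho)\bigr]}{\tilde{\kappa}'^{\,2}(\tilde{H}-\rho)^{2}+\tilde{\kappa}^{4}(1+\tilde{H}^{2})^{3}}
\]
provided in the preceding theorem, so that no fresh computation with Frenet formulas is required. Next I would argue that the denominator never vanishes along the curve: the term $\tilde{\kappa}^{4}(1+\tilde{H}^{2})^{3}$ is strictly positive because $\tilde{\alpha}$ is a genuine curve (regularity forces $\tilde{\kappa}\neq 0$), and consequently the fraction is well defined.

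Then I would dispose of the prefactor $\epsilon(\tilde{H}-\rho)$. The sign $\epsilon=\pm 1$ is nowhere zero by definition. The hypothesis that $(\alpha,\tilde{\alpha})$ is non-helical and non-planar forces $\tilde{H}\neq\rho$: indeed, if $\tilde{H}=\rho$ then by Theorem stating $\kappa=\frac{-\epsilon\tilde{\kappa}(1+\tilde{H}\rho)(\rho-\tilde{H})}{\tilde{H}(1+\rho^{2})}$ we would have $\kappa\equiv 0$, contradicting non-planarity of $\alpha$; moreover $\rho=\tilde{H}=\mathrm{const}$ would make $\tilde{\alpha}$ a general helix by Corollary~2.4, contradicting the non-helical assumption. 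Hence $(\tau-\tau_{G})_{n}=0$ reduces to the vanishing of the bracketed numerator, which is exactly equation~(34).

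The only subtle point, and the main obstacle I foresee, is justifying carefully that the non-helical and non-planar hypotheses suffice to cancel every unwanted factor (in particular ruling out $\tilde{H}=\rho$ identically versus at isolated points); once this is handled the equivalence is immediate, so the proof amounts to a one-line reading of the formula for $(\tau-\tau_{G})_{n}$ followed by these non-degeneracy remarks.
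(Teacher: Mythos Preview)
Your proposal is correct and follows exactly the route the paper intends: the paper gives no explicit proof of this theorem, merely stating ``Further the following theorem can be written'' immediately after displaying the formula for $(\tau-\tau_{G})_{n}$ in equation~(33), so the argument is precisely to set that quantity equal to zero and strip off the nonvanishing factors. Your justification that $\tilde{H}\neq\rho$ (since $\rho=\tilde{H}$ forces $\tilde{H}'=0$, hence $\tilde{\alpha}$ a general helix) and that the denominator is strictly positive in fact supplies more detail than the paper itself offers.
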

\subsection{Binormal indicatrix $\alpha_{b}$ $=B(s)$ of the Bertrand curve in $G$}
\begin{definition}
	Suppose that the curve $\alpha$ is regular curve in $G$. The curve $\alpha
	_{b}$ is called the binormal indicatrix of the curve $\alpha$ and $\alpha
	_{b}:I\subset S^{2}\subset g$ is defined by%
	\begin{equation}
	\alpha_{b}(s_{b})=B(s)\text{.} \label{35}%
	\end{equation}
	
\end{definition}

Thus, the binormal indicatrix of the Bertrand curve in $G$ is given by%
\begin{equation}
\alpha_{b}=\frac{-\epsilon}{\sqrt{1+\rho^{2}}}\{\rho\tilde{T}+\tilde
{B}\}\text{.} \label{36}%
\end{equation}

\begin{theorem}
	Suppose that the curve $\alpha$ is a regular curve in $G$ and the Frenet
	apparatus of the binormal indicatrix $\alpha_{b}$ $=B$ of the Bertrand curve
	are denoted by $\{T_{b},N_{b},B_{b},\kappa_{b},(\tau-\tau_{G})_{b}\}$.Then we
	have%
	\begin{align}
	T_{b}  &  =\epsilon\tilde{N}\label{37}\\
	N_{b}  &  =\frac{-\epsilon}{\sqrt{1+\tilde{H}^{2}}}\{\tilde{T}-\tilde{H}%
	\tilde{B}\}\label{38}\\
	B_{b}  &  =\frac{1}{\sqrt{1+\tilde{H}^{2}}}\text{\ }\{\tilde{H}\tilde
	{T}+\tilde{B}\} \label{39}%
	\end{align}
	and%
	\begin{equation}
	s_{b}=-%
	%TCIMACRO{\dint }%
	%BeginExpansion
	{\displaystyle\int}
	%EndExpansion
	\frac{\tilde{\kappa}(\rho-\tilde{H})^{2}}{\tilde{H}(1+\rho^{2})}ds\text{,
	}\kappa_{b}=\frac{\sqrt{(1+\tilde{H}^{2})(1+\rho^{2})}}{\rho-\tilde{H}}\text{,
}(\tau-\tau_{G})_{b}=\frac{\epsilon\tilde{\kappa}^{^{\prime}}\sqrt{1+\rho^{2}%
}}{\tilde{\kappa}^{^{2}}(1+\tilde{H}^{2})}\text{.} \label{40}%
\end{equation}
and $s_{b}$ is a natural representation of the binormal indicatrix of the
curve $\alpha$.
\end{theorem}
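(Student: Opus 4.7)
The plan is to mirror the calculations carried out for the tangent indicatrix in Theorem 12, starting from the already-established expression $\alpha_{b}(s)=B(s)=\dfrac{-\epsilon}{\sqrt{1+\rho^{2}}}\{\rho\tilde{T}+\tilde{B}\}$ produced by Theorem 9. First I would differentiate $B$ along $\alpha$ using the Lie-group Frenet formula for $\alpha$ itself, $B'(s)=-(\tau-\tau_{G})N$, and substitute the relations $N=\epsilon\tilde{N}$ and $\tau-\tau_{G}=\tilde{\kappa}(\rho-\tilde{H})^{2}/(\tilde{H}(1+\rho^{2}))$ from Theorem 9. The result collapses to a single vector in the $\tilde{N}$-direction, and its length is $ds_{b}/ds$; integrating yields $s_{b}$, and normalization yields $T_{b}=\epsilon\tilde{N}$. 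This side-steps having to differentiate the messy quotient $\rho/\sqrt{1+\rho^{2}}$ directly.

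In the second stage I would differentiate $T_{b}=\epsilon\tilde{N}$ with respect to $s$ using the Frenet formula $\tilde{N}'=-\tilde{\kappa}\tilde{T}+(\tilde{\tau}-\tilde{\tau}_{G})\tilde{B}$ for $\tilde{\alpha}$ in $G$. Using $\tilde{\tau}-\tilde{\tau}_{G}=\tilde{\kappa}\tilde{H}$, this simplifies to $T_{b}'(s)=\epsilon\tilde{\kappa}(-\tilde{T}+\tilde{H}\tilde{B})$, a vector orthogonal to $T_{b}$ as required. The Frenet relation $dT_{b}/ds_{b}=\kappa_{b}N_{b}$, combined with the $ds_{b}/ds$ computed in the first stage, then isolates the unit vector $N_{b}$ and the scalar $\kappa_{b}$ in the stated form. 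The vector $B_{b}$ is obtained at once from $B_{b}=T_{b}\wedge N_{b}$ in $g\cong\mathbb{R}^{3}$, where the orthonormality of $\{\tilde{T},\tilde{N},\tilde{B}\}$ makes the cross product a short direct calculation.

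To extract $(\tau-\tau_{G})_{b}$ I would differentiate $B_{b}=(\tilde{H}\tilde{T}+\tilde{B})/\sqrt{1+\tilde{H}^{2}}$ with respect to $s$, apply the Frenet formulas for $\tilde{\alpha}$ once more, and then read off the $N_{b}$-component after dividing by $ds_{b}/ds$, using the identity $\langle dB_{b}/ds_{b},N_{b}\rangle=-(\tau-\tau_{G})_{b}$. A helpful observation is that throughout this computation the Lie-group correction $\tilde{\tau}_{G}$ is absorbed into the combination $\tilde{\tau}-\tilde{\tau}_{G}$, so no separate treatment of $\tau_{G}$ is needed; the only place it re-emerges is at the end through the shifted torsion on the left-hand side.

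The main obstacle is purely algebraic, confined to this last step: the derivative of $B_{b}$ produces competing terms in $\tilde{T}$, $\tilde{N}$, $\tilde{B}$, and collapsing them to the stated closed form requires the systematic use of the identities $(\tilde{\tau}-\tilde{\tau}_{G})'=\rho\tilde{\kappa}'$ (the very definition of $\rho$) and $\tilde{H}'=(\rho-\tilde{H})\tilde{\kappa}'/\tilde{\kappa}$, followed by careful cancellation of the factors $(\rho-\tilde{H})^{2}$ and $1+\tilde{H}^{2}$ carried over from $ds_{b}/ds$. Once these substitutions are performed, the expression reduces to the compact ratio $\epsilon\tilde{\kappa}'\sqrt{1+\rho^{2}}/(\tilde{\kappa}^{2}(1+\tilde{H}^{2}))$ appearing in the statement, completing the proof.
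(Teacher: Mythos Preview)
Your plan is correct and matches the paper's (implicit) approach: the paper does not write out a proof for this theorem, but the derivation is meant to parallel the tangent-indicatrix computation of Theorem~3.4, and your outline does exactly that, including the sensible shortcut of invoking $B'=-(\tau-\tau_G)N$ for $\alpha$ rather than differentiating the explicit expression for $\alpha_b$ in the $\tilde{\alpha}$-frame. One small point to keep clean when you execute the second and third stages: the Frenet formulas $\tilde{N}'=-\tilde{\kappa}\tilde{T}+(\tilde{\tau}-\tilde{\tau}_G)\tilde{B}$, etc., are derivatives with respect to $s^{\ast}$, so when you combine them with $ds_b/ds$ you must also carry the factor $ds^{\ast}/ds$ from Corollary~3.2 (equivalently, work directly with $ds^{\ast}/ds_b=\sqrt{1+\rho^{2}}/(\tilde{\kappa}(\tilde{H}-\rho))$, as the paper does); otherwise $\kappa_b$ and $(\tau-\tau_G)_b$ will be off by that factor.
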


The geodesic curvature of the principal image of the principal normal
indicatrix of $\alpha_{b}(s)$ is given by%
\begin{equation}
\Gamma_{b}=\frac{-\tilde{\kappa}^{3}(1+\tilde{H}^{2})^{\frac{3}{2}}%
	(\rho-\tilde{H})^{2}[\tilde{\kappa}^{^{\prime\prime}}\tilde{\kappa}%
	(1+\tilde{H}^{2})-3\tilde{\kappa}^{^{\prime}2}(1+\rho\tilde{H})]}{\sqrt
	{1+\rho^{2}}(\tilde{\kappa}(1+\tilde{H}^{2})^{3}+\tilde{\kappa}^{^{\prime}%
		2}(\tilde{H}-\rho)^{2})^{\frac{3}{2}}}\frac{ds^{\ast}}{ds_{b}} \label{41}%
\end{equation}
where $\frac{ds^{\ast}}{ds_{b}}=\frac{\sqrt{1+\rho^{2}}}{\tilde{\kappa}%
	(\tilde{H}-\rho)}$.

From Theorem 3.11, we obtain the following corollaries:

\begin{corollary}
	Suppose that the curve $\alpha$ is \ a Bertrand curve in $G.$ Since
	$\Gamma_{t}=\Gamma_{b},$ the spherical images of the tangent and binormal
	indicatrices of $\alpha$ are the curves with same curvature and same torsion.
\end{corollary}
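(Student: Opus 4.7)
The strategy is to verify the hypothesis $\Gamma_{t}=\Gamma_{b}$ by direct comparison of the two expressions already derived, and then to promote the equality of geodesic curvatures to equality of ambient curvature and torsion using the classical fact that a unit-speed curve on $S^{2}$ is essentially determined by its geodesic curvature alone.

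First I would check the hypothesis by inspecting equations (26) and (41). The numerators and the explicit parts of the denominators of $\Gamma_{t}$ and $\Gamma_{b}$ are literally the same closed-form expression in $\tilde{\kappa}$, $\tilde{H}$, $\rho$ and their derivatives, so the only possible discrepancy sits in the trailing factors $\frac{ds^{*}}{ds_{t}}$ and $\frac{ds^{*}}{ds_{b}}$. Both of these are recorded in Theorems 3.7 and 3.11 as $\frac{\sqrt{1+\rho^{2}}}{\tilde{\kappa}(\tilde{H}-\rho)}$, so they coincide, whence $\Gamma_{t}=\Gamma_{b}$ pointwise in the common parameter. This is precisely the input the corollary assumes.

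Next I would invoke the standard identity for a regular unit-speed curve $\gamma$ lying on the unit sphere $S^{2}\subset g$: its Frenet curvature and torsion can be written purely in terms of the geodesic curvature $\kappa_{g}$ on $S^{2}$, in the form $\kappa^{2}=1+\kappa_{g}^{2}$ together with a closed-form expression for $\tau$ involving $\kappa_{g}$ and $\kappa_{g}'$. One derives this by decomposing the Darboux frame of $\gamma$ against the ambient Frenet frame, and the derivation transfers verbatim to $G$ since $S^{2}\subset g\cong T_{e}G$ is isometric to the Euclidean unit sphere under the bi-invariant inner product.

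The two curves to compare, namely the principal-normal indicatrices of $\alpha_{t}$ and $\alpha_{b}$, both lie on $S^{2}\subset g$ and carry geodesic curvatures $\Gamma_{t}$ and $\Gamma_{b}$ by construction. The equality $\Gamma_{t}=\Gamma_{b}$ established in the first step therefore forces the curvature and torsion determined by the spherical-curve identity to agree for the two images, which is the assertion of the corollary. I expect the only real subtlety to be notational, namely pinning down precisely which curve on $S^{2}$ plays the role of ``spherical image of the indicatrix'' in each case; once that identification is fixed, the analytic content is essentially exhausted by the inspection of (26), (41) and the standard spherical-curve formulas.
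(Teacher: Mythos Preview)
Your proposal is correct and matches the paper's treatment: the paper gives no separate proof but simply records the corollary as an immediate consequence of the identical expressions (26) and (41) together with $\dfrac{ds^{\ast}}{ds_{t}}=\dfrac{ds^{\ast}}{ds_{b}}$, exactly the comparison you carry out. Your added step---that for a unit-speed spherical curve the ambient curvature and torsion are functions of the geodesic curvature alone---makes explicit what the paper leaves tacit; note incidentally that one could bypass even this by observing from (24) and (38) that $N_{b}=-\epsilon N_{t}$, so the two principal normal images are antipodal and hence congruent.
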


\begin{corollary}
	Suppose that $(\alpha,\tilde{\alpha})$ is a non-helical and non-planar
	Bertrand curve pair in $G.$ If the curve $\alpha$\ is a slant helix, then its
	necessary and sufficient condition is $\alpha_{b}=$ $spherical$ $helix$.
\end{corollary}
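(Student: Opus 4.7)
The plan is to bootstrap from the two immediately preceding results: Corollary~3.13, which asserts $\Gamma_{t}=\Gamma_{b}$, and Theorem~3.9(i), which asserts that $\alpha$ is a slant helix if and only if its tangent indicatrix $\alpha_{t}$ is a spherical helix. Together these two facts should force the desired equivalence essentially for free.

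First, I would make explicit the characterisation of ``spherical helix'' that Theorem~3.9 implicitly relies on: a regular curve on $S^{2}\subset g$ is a spherical helix precisely when the geodesic curvature of the principal image of its principal normal indicatrix---the invariant $\Gamma$ displayed in~(21), computed for $\alpha_{t}$ in~(26) and for $\alpha_{b}$ in~(41)---is constant along the curve. This is the Lie group analogue of the classical Izumiya--Takeuchi characterisation; it is the hidden content of Theorem~3.9(i) and it is the only external ingredient the present corollary requires.

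Next I would chain the equivalences as follows: $\alpha$ is a slant helix $\Longleftrightarrow$ $\alpha_{t}$ is a spherical helix (by Theorem~3.9(i)) $\Longleftrightarrow$ $\Gamma_{t}$ is constant (by the characterisation just recalled) $\Longleftrightarrow$ $\Gamma_{b}$ is constant (by Corollary~3.13, since $\Gamma_{t}=\Gamma_{b}$ pointwise) $\Longleftrightarrow$ $\alpha_{b}$ is a spherical helix (by the same characterisation, this time applied to $\alpha_{b}$). The non-helical and non-planar hypothesis is invoked exactly to rule out the degenerate cases in which the denominators in~(21),~(26),~(41) collapse---in particular the cases $\tilde{H}-\rho=0$ or $\tilde{\kappa}'\equiv 0$---so that ``$\Gamma_{t}$ constant'' and ``$\Gamma_{b}$ constant'' are meaningful statements and not vacuous.

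The only step I expect to require care is the verification that Corollary~3.13 yields the identity $\Gamma_{t}=\Gamma_{b}$ as functions (not merely as formal expressions in the same primitive data), and that the spherical helix characterisation via constancy of $\Gamma$ is valid in the Lie group framework without further hypotheses on the ambient torsion $\tilde{\tau}_{G}$. Both checks should be immediate from the explicit formulas already displayed in Section~3, so once they are in place the proof collapses to a one-line concatenation of Theorem~3.9(i) and Corollary~3.13.
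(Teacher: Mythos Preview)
Your overall strategy---reduce Corollary~3.13 to Theorem~3.5(i) via Corollary~3.12---is exactly what the paper has in mind (it states the corollary without proof, as an immediate consequence of the computations culminating in Theorem~3.11 and Corollary~3.12). Two points need correction, however.

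First, the labels are off by one throughout: the result asserting $\Gamma_{t}=\Gamma_{b}$ is Corollary~3.12, and the equivalence ``$\alpha$ slant helix $\Leftrightarrow$ $\alpha_{t}$ spherical helix'' is Theorem~3.5(i), not~3.9(i).

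Second, and more substantively, your middle equivalence is misstated. A \emph{spherical helix} is a general helix lying on the sphere, so by Corollary~2.4 it is characterised by constancy of its harmonic curvature $H$, not of the invariant $\Gamma$. Constancy of $\Gamma_{t}$ (respectively $\Gamma_{b}$) would say that $\alpha_{t}$ (respectively $\alpha_{b}$) is a \emph{slant} helix, which is a different condition. The chain you want is
\[
\alpha\text{ slant helix}\ \Longleftrightarrow\ \alpha_{t}\text{ spherical helix}\ \Longleftrightarrow\ H_{t}\text{ constant}\ \Longleftrightarrow\ H_{b}\text{ constant}\ \Longleftrightarrow\ \alpha_{b}\text{ spherical helix},
\]
and the middle step comes not from $\Gamma_{t}=\Gamma_{b}$ alone but from the stronger content of Corollary~3.12: comparing~(25) with~(40) one sees $\kappa_{t}=-\kappa_{b}$ and $(\tau-\tau_{G})_{t}=-\epsilon(\tau-\tau_{G})_{b}$, hence $H_{t}=\epsilon H_{b}$, so one is constant iff the other is. Once you swap $\Gamma$ for $H$ and invoke Corollary~3.12 in this sharper form, your argument is complete and coincides with the paper's intended proof.
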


\begin{corollary}
	Suppose that $(\alpha,\tilde{\alpha})$ is a non-helical and non-planar
	Bertrand curve pair in $G.$ If the curve $\tilde{\alpha}$\ is a slant helix,
	then its necessary and sufficient condition is $\alpha_{b}=$ $spherical$
	$helix$.
\end{corollary}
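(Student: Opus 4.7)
The plan is to derive this corollary directly by chaining two previously established results, rather than revisiting the Frenet-frame calculation for $\alpha_b$. The hypothesis concerns the mate curve $\tilde{\alpha}$, whereas the conclusion is a statement about the binormal indicatrix of $\alpha$; the natural bridge between them is $\alpha$ itself.

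First I would invoke Theorem 2.7, which asserts that in a Bertrand pair $(\alpha,\tilde{\alpha})$ the curve $\alpha$ is a slant helix if and only if $\tilde{\alpha}$ is. This lets me replace the hypothesis ``$\tilde{\alpha}$ is a slant helix'' with the equivalent statement ``$\alpha$ is a slant helix.'' Next I would apply Corollary 3.14, which supplies the remaining equivalence ``$\alpha$ is a slant helix if and only if $\alpha_b$ is a spherical helix.'' Combining these two biconditionals yields
\[
\tilde{\alpha}\ \text{slant helix}\ \Longleftrightarrow\ \alpha\ \text{slant helix}\ \Longleftrightarrow\ \alpha_b\ \text{spherical helix},
\]
which is the claim.

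I do not anticipate any real obstacle: the non-helical and non-planar assumptions are already built into the hypotheses of Theorem 2.7 and Corollary 3.14, and they ensure that $\tilde{H}$ and $\rho-\tilde{H}$ behave nicely, so that the quantities in Theorem 3.11 and the expression (41) for $\Gamma_b$ are well-defined. If one insisted on a self-contained verification, one could instead combine the formulas for $\kappa_b$ and $(\tau-\tau_G)_b$ from Theorem 3.11 (or equivalently the expression for $\Gamma_b$) with Theorem 2.5 applied to $\tilde{\alpha}$, and reduce the problem to the single vanishing condition $\tilde{\kappa}''\tilde{\kappa}(1+\tilde{H}^2)-3\tilde{\kappa}'^{2}(1+\rho\tilde{H})=0$ of (27); but the two-step biconditional above is much shorter and is the route I would take.
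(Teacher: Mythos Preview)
Your argument is sound, but note a numbering slip: the result you invoke as ``Corollary~3.14'' is actually Corollary~3.13 (the equivalence $\alpha$ slant helix $\Leftrightarrow$ $\alpha_b$ spherical helix); Corollary~3.14 is precisely the statement you are proving. Also, the non-helical and non-planar hypotheses are \emph{not} explicitly stated in Theorem~2.7, contrary to what you say; they enter only through Corollary~3.13, which is enough for your chain to go through.

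As to the comparison: the paper gives no explicit proof and simply declares Corollaries~3.12--3.14 to be consequences of Theorem~3.11. The intended reading is the direct computation you sketch in your last paragraph: from (40) one gets
\[
H_b=\frac{(\tau-\tau_G)_b}{\kappa_b}=\frac{\epsilon\,\tilde\kappa'(\rho-\tilde H)}{\tilde\kappa^{2}(1+\tilde H^{2})^{3/2}}=\frac{\epsilon}{\tilde\sigma},
\]
so $H_b$ is constant exactly when $\tilde\sigma$ is, i.e.\ when $\tilde\alpha$ is a slant helix by Theorem~2.5. Your primary route (Theorem~2.7 chained with Corollary~3.13) is a legitimate and slightly more structural alternative; it avoids recomputing anything but relies on Corollary~3.13 having already been established, whereas the paper's route reads both 3.13 and 3.14 off the same formula in one stroke.
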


% BibTeX users please use one of
%\bibliographystyle{spbasic}      % basic style, author-year citations
%\bibliographystyle{spmpsci}      % mathematics and physical sciences
%\bibliographystyle{spphys}       % APS-like style for physics
%\bibliography{}   % name your BibTeX data base

% Non-BibTeX users please use
\begin{center}

\end{center}

{\small

\noindent{\bf Ali \c{C}akmak}

\noindent Department of Mathematics,

\noindent Faculty of Arts and Sciences

\noindent Bitlis Eren University

\noindent Bitlis,Turkey

\noindent E-mail: acakmak@beu.edu.tr}\\

\end{document}